\documentclass[review]{elsarticle}
\usepackage{amsmath}

\usepackage{lipsum}
\usepackage{amsfonts}
\usepackage{graphicx}
\usepackage{epstopdf}
\usepackage{algorithmic}

\usepackage{graphicx}
\usepackage{epstopdf}
\usepackage{lineno,hyperref}
\usepackage{geometry}
\usepackage{epsfig}
\usepackage{lineno,hyperref}
\usepackage{bookmark}
\usepackage{multirow}
\usepackage{amsfonts}
\usepackage{multirow}
\usepackage{algorithmic}
\usepackage{algorithm}
\usepackage[figuresright]{rotating}
\usepackage{amscd}
\usepackage{mathrsfs}
\usepackage{booktabs,longtable}
\usepackage{indentfirst}
\usepackage{subfigure}
\usepackage{amstext}
\usepackage{amssymb}
\usepackage{fancyhdr}

\usepackage[nodots]{numcompress}

\usepackage[capitalize]{cleveref}

\usepackage{titlesec}
\usepackage{threeparttable}
\usepackage[title]{appendix}

\modulolinenumbers[5]
\biboptions{sort&compress}
%\numberwithin{equation}{section}
%\newtheorem{thm}{Theorem}[subsection]
%\renewcommand{\thethm}{\arabic{subsection}.\arabic{thm}}
%\newtheorem{theorem}{Theorem}[section]
\newtheorem{theorem}{Theorem}

\newtheorem{lemma}[theorem]{Lemma}
\newtheorem{remark}{Remark}

\newtheorem{condition}{Condition}

\newproof{proof}{\textbf{Proof}}

%\newfloatcommand{capbtabbox}{table}[][\FBwidth]

\journal{Journal of \LaTeX\ Templates}

%%%%%%%%%%%%%%%%%%%%%%%
%% Elsevier bibliography styles
%%%%%%%%%%%%%%%%%%%%%%%
%% To change the style, put a % in front of the second line of the current style and
%% remove the % from the second line of the style you would like to use.
%%%%%%%%%%%%%%%%%%%%%%%

%% Numbered
%\bibliographystyle{model1-num-names}

%% Numbered without titles
%\bibliographystyle{model1a-num-names}

%% Harvard
%\bibliographystyle{model2-names.bst}\biboptions{authoryear}

%% Vancouver numbered
%\usepackage{numcompress}\bibliographystyle{model3-num-names}

%% Vancouver name/year
%\usepackage{numcompress}\bibliographystyle{model4-names}\biboptions{authoryear}

%% APA style
%\bibliographystyle{model5-names}\biboptions{authoryear}

%% AMA style
%\usepackage{numcompress}\bibliographystyle{model6-num-names}

%% `Elsevier LaTeX' style
%\bibliographystyle{elsarticle-num}
\bibliographystyle{elsarticle-num}

%\bibliographystyle{model4-names}
%\bibliographystyle{elsarticle-num}
%\biboptions{authoryear}
%%%%%%%%%%%%%%%%%%%%%%%

\begin{document}
	
	\begin{frontmatter}
		
		\title{Optimal Sampling Algorithms for Block Matrix Multiplication\tnoteref{mytitlenote}}
		\tnotetext[mytitlenote]{The work is supported by the National Natural Science Foundation of China (No. 11671060) and the Natural Science Foundation Project of CQ CSTC (No. cstc2019jcyj-msxmX0267)}
		
		%% Group authors per affiliation:

		\author{Chengmei Niu}
		\ead{chengmeiniu@cqu.edu.cn.}
		
		\author  {Hanyu Li \corref{mycorrespondingauthor}}
			\cortext[mycorrespondingauthor]{Corresponding author}
		\ead{lihy.hy@gmail.com or hyli@cqu.edu.cn}

		\address{College of Mathematics and Statistics, Chongqing University, Chongqing 401331, P.R. China}
		
		\begin{abstract}
				In this paper, we investigate the randomized algorithms for block matrix multiplication from random sampling perspective.  Based on the A-optimal design criterion, the optimal sampling probabilities and sampling block sizes are obtained.  To improve the practicability of the  block sizes, two modified ones with less computation cost are provided. With respect to the second one, a two step algorithm is also devised. Moreover, the probability error bounds for the proposed algorithms are given.  Extensive numerical results show that our methods outperform the existing one in the literature.
		\end{abstract}
		
		\begin{keyword}
			Optimal sampling\sep Block matrix multiplication\sep  A-optimal design criterion\sep Two step algorithm\sep Probability error bounds \\
		$\mathit{	AMS }$ $\mathit{subject \ classification. }$ 68W20
		%\MSC[2010] 00-01\sep  99-00
		\end{keyword}
		
	\end{frontmatter}
	
	%\linenumbers
	
	\section{Introduction}\label{sec.1}

As we know, matrix multiplication is a classical problem in numerical linear algebra. The algorithms of this problem are well-known and can be found in any book on matrix computations, see e.g. \cite{Golub2013}. However, in the age of big data, these famous algorithms have been encountered enormous challenges because of their computation cost. So, some scholars introduced the randomized ideas to matrix multiplication and proposed some randomized algorithms for this problem.

To the best of our knowledge, Cohen and Lewis \cite{Cohen1999} first applied the  randomized idea to approximate matrix multiplication. In 2006, motivated by a fast sampling algorithm for low-rank approximations given in \cite{Frieze2004}, Drineas et al.\cite{Drineas2006} proposed the now-famous randomized algorithm for matrix multiplication called the BasicMatrixMultiplication algorithm. It picks the outer products using the nonuniform sampling probabilities based on the norms of columns and rows of the involved matrices $M$ and $N$, respectively, that is, the following probabilities  
\begin{align}\label{1.1}
	p_i=\frac{\|M^{(i)}\|_2\|N_{(i)}\|_2}{\sum_{i = 1}^{n}\|M^{(i)}\|_2\|N_{(i)}\|_2}, 
	\ i=1,\cdots,n,
\end{align}
where $M^{(i)}$ denotes the $i$-th column of  $M \in \mathbb{R}^{m\times n}$,  $N_{(i)}$ stands for  the $i$-th row of $N \in \mathbb{R}^{n\times p}$, and $\|\cdot\|_2$ represents the Euclidean norm of a vector. The specific algorithm is given in Algorithm \ref{BMMA}.
\begin{algorithm}[htbp]
	\caption{BasicMatrixMultiplication Algorithm \cite{Drineas2006} }\label{BMMA}\small
	$\textbf{Input:}$  $M \in \mathbb{R}^{m\times n} $, $N \in \mathbb{R}^{n\times p}$, the number of sampling $c \in \mathbb{Z}^+$ such that $1\leq c \leq n$, and $\{p_i\}^{n}_{i=1}$ given as \eqref{1.1}. \\
	$\textbf{Output:}$ $C \in \mathbb{R}^{m\times c}$ and $D\in \mathbb{R}^{c\times p}.$
	\begin{enumerate}
		\item for $t=1$ to $c$,\\
		\begin{enumerate}
			\item  sample $i_t\in \{1,\cdots,n\}$ with $\rm{Pr}\mathit{{(i_t=s)=p_s}}$, $s=1,\cdots,n$,  	 	independently and with replacement.
			\item  set $C^{(t)}=\frac{M^{(i_t)}}{\sqrt{cp_{i_t}}}$, and $D_{(t)}=\frac{N_{(i_t)}}{\sqrt{cp_{i_t}}}$.
		\end{enumerate}
		\item return $C$ and $D$.
	\end{enumerate}
\end{algorithm}
Later, the
BasicMatrixMultiplication algorithm was extended to the block version by Wu \cite{Wu2018}. That is, a set of submatrices were sampled  by using the following sampling probabilities

\begin{align}\label{1.2}
	p_k= \frac{\|M^kN_k\|_F}{\sum_{k = 1}^{K}\|M^kN_k\|_F},\ k=1,\cdots,K,
\end{align}
where $M^k \in \mathbb{R}^{m\times n_k}$ represents the $k$-th block of $M=\begin{bmatrix}
	M^1 & M^2 & \cdots & M^K
\end{bmatrix}$, $N_k \in \mathbb{R}^{n_k\times p}$ symbolizes the $k$-th block of $N^T=\begin{bmatrix}
	N^T_1 & N^T_2& \cdots & N^T_K
\end{bmatrix}$, and $\|\cdot\|_F$ denotes the Frobenius norm of a matrix. In 2019,  
Chang et al. \cite{Chang2019}  proposed  another  block version of the BasicMatrixMultiplication algorithm  with the  following  sampling probabilities
\begin{align}\label{1.3}
	p_\mathbb{K}=\frac{\|\sum_{k \in \mathbb{K}}M^kN_k\|_F}{\sum_{\mathbb{K'}}\|\sum_{k \in \mathbb{K'}}M^kN_k\|_F}, 
\end{align}	
where $\mathbb{K} \subset \{\mathbb{K'}\}$ and $\mathbb{K'}$  denote the subsets of $\{1,2,3,\cdots,K\}$. Recently,
the following sampling probabilities	
\begin{align}\label{1.4}
	p_k= \frac{\|M^k\|_F\|N_k\|_F}{\sum_{k = 1}^{K}\|M^k\|_F\|N_k\|_F}, \ k=1,\cdots,K
\end{align}
were devised for the block matrix multiplication by Charalambides et al. \cite{Charalambides2020}. They are
easier to compute compared with \eqref{1.2} and \eqref{1.3}. In addition, there are some other generalizations of the BasicMatrixMultiplication algorithm \cite{Eriksson2011,Wu2020} and some randomized algorithms for matrix multiplication based on random  projection \cite{Cohen2015,Eftekhari2015,Srinivasa2020}. In particular,  a  block diagonal random projection method with different  block sizes  was developed in \cite{Srinivasa2020}.

In this paper, we consider the randomized algorithms for block matrix multiplication based on random sampling further using the technique of optimal subsampling proposed recently in the field of statistics\cite{Zhang2021,Zhu2015,Zuo2021}. Specifically, 
we  derive the optimal sampling probabilities and sampling block sizes  by the A-optimal design criterion\cite{Pukelsheim1993}. 
Moreover, unlike\cite{Wu2018,Chang2019,Charalambides2020},
we don't sample the blocks directly but sample the outer products on each block with the optimal sampling probabilities and sampling block sizes.

The remainder of this paper is organized as follows. The randomized algorithm for block matrix multiplication, optimal sampling probabilities and optimal sampling block sizes are presented in Section \ref{sec.2}. In Section \ref{sec.3}, we modify the block sizes to make them easier to compute and provide a two step algorithm. Furthermore, the probability error bounds of the corresponding algorithms are also given in Sections \ref{sec.2} and \ref{sec.3}, respectively. Extensive numerical experiments are shown in Section \ref{sec.4}. Finally, we make the concluding remarks of the whole paper.
	
	\section{Randomized Algorithm and Optimal Sampling Criterion}
	\label{sec.2}
	
	We first rewrite the product of the block matrices $M \in \mathbb{R}^{m\times n}$ and $N \in \mathbb{R}^{n\times p}$ appearing in Section \ref{sec.1}
as follows
\begin{eqnarray}\label{2.3}
	MN=\sum_{k = 1}^{K}{M^{k}N_{k}}=\sum_{k = 1}^{K}\sum_{i = 1}^{n_k}{M^{k(i)}N_{k(i)}},
\end{eqnarray}	
where $M^{k(i)}$ is viewed as the $i$-th column of the $k$-th block of $M$ and $N_{k(i)}$ is  the $i$-th row of the $k$-th block of $N$.	
Then,  Algorithm \ref{BMMA} is applied  to each block. Thus, we  have  $K$ estimates for  the $K$ blocks as follows
\begin{eqnarray}\label{3.2}
	C^kD_k=\sum_{t = 1}^{c_k}{C^{k(t)}D_{k(t)}}=\sum_{t = 1}^{c_k}\frac{M^{k(i_t)}N_{k(i_t)}}{c_kp_{k_{i_t}}},\ k=1,\cdots,K,
\end{eqnarray}
where $c_k$ represents the number of extracted outer products from the $k$-th block,
$C^{k(t)}=M^{k(i_t)}/\sqrt{c_kp_{k_{i_t}}}$ and $D_{k(t)}=N_{k(i_t)}/\sqrt{c_kp_{k_{i_t}}}$
with $p_{k_{i_t}}$ be the sampling probability satisfying $\sum_{i=1}^{n_k}p_{k_i}=1$. Note that these probabilities as well as the sampling block sizes $c_k$ with $k=1,\cdots,K$ need to be determined later in this section. Hence, they are undetermined at present. 
Therefore, the  final estimate is
\begin{eqnarray}\label{3.3}
	CD=\sum_{k = 1}^{K}C^kD_k=\sum_{k = 1}^{K}\sum_{t = 1}^{c_k}{C^{k(t)}D_{k(t)}}=\sum_{k = 1}^{K}\sum_{t = 1}^{c_k}\frac{M^{k(i_t)}N_{k(i_t)}}{c_kp_{k_{i_t}}}.
\end{eqnarray}
The specific algorithm is presented in Algorithm \ref{SABMM }.

%\renewcommand{\thefootnote}{\ding{\numexpr192+\value{footnote}}}

%\footnotetext{c}

\begin{algorithm}[htbp]
	\caption{Sampling Algorithm for Block Matrix Multiplication\ }\label{SABMM }\small
	$\textbf{Input:}$ $M \in \mathbb{R}^{m\times n} $ and $N \in \mathbb{R}^{n\times p}$ set as in Section \ref{sec.1},  $\{n_{k}\}^{K}_{k=1}$such that $\sum_{k = 1}^{K}n_k=n$, $\{c_{k}\}^{K}_{k=1}$ with $c_k \in \mathbb{Z}^+$ and $1\leq c_k \leq n_k$ such that $\sum_{k = 1}^{K}c_k=c$ for $c\in \mathbb{Z}^+$, and $\{p_{k_i}\}^{n_k}_{i=1}$ with $p_{k_i}\geq0$  such that $\sum_{i = 1}^{n_k}p_{k_i}=1$ for  $k=1,\cdots,K$. \\
	$\textbf{Output:}$ $C \in \mathbb{R}^{m\times c}$, $D\in \mathbb{R}^{c\times p}$, and $CD$.
	\begin{enumerate}
		\item for   $k\in{1,\cdots,K}$ do \\
		\qquad $[C^k,D_k]=$BasicMatrixMultiplication$(M^k,N_k,c_k,\{p_{k_i}\}^{n_k}_{i=1})$
		\item end
		\item $C=\begin{bmatrix}
			C^1 & C^2 & \cdots & C^K
		\end{bmatrix}$, $D^T=\begin{bmatrix}
			D^T_1 & D^T_2 & \cdots & D^T_K
		\end{bmatrix}$
		\item  $CD=\sum_{k = 1}^{K}C^kD_k $
		\item return  $C$, $D$, and $CD$		
	\end{enumerate}
\end{algorithm}

In the following,  we  discuss the asymptotic properties of the estimation obtained by Algorithm  \ref{SABMM }.
Based on these asymptotic properties and the A-optimal design criterion, we can construct the optimal sampling probabilities and sampling block sizes. Two conditions and a lemma are first listed as follows.

\begin{condition}\label{C.1}
	\begin{align}\label{3.4}
		\sum_{k=1}^{K}\frac{1}{ c^2_k}[(\sum_{i=1}^{n_k}\frac{|M^k_{(h,i)}||N_{k(i,f)}|}{p_{k_{i}}})^3]=o_p(1),
	\end{align}
	where $M^k_{(h,i)}$ with $h=1,\cdots,m$ and $i=1,\cdots,n_k$  represent the elements at the $(h,i)$-th position of the $k$-th block of $M$,   and  $N_{k(i,f)}$ with $f=1,\cdots,p$ and $i=1,\cdots,n_k$ denote  the $(i,f)$-th position  of the $k$-th block of $N$.
\end{condition}

\begin{condition}\label{C.2}
	\begin{align}\label{3.5}
		\sum_{k = 1}^{K}\sum_{i = 1}^{n_k}\frac{(M^k_{(h,i)})^2(N_{k(i,f)})^2}{\sqrt{c_k}p_{k_i}}={O}_p(1).
	\end{align}
	
\end{condition}
\begin{lemma}\label{lem3.1}
	The matrices $C$ and $D$ constructed by Algorithm \ref{SABMM } satisfy
	\begin{eqnarray}\label{3.6}
		\mathrm{E}{[(CD)_{(h,f)}]}=(MN)_{(h,f)}
	\end{eqnarray}
	and
	\begin{eqnarray}\label{3.7}
		\mathrm{Var}{[(CD)_{(h,f)}]}=\sum_{k=1}^{K}\sum_{i=1}^{n_k}\frac{(M^k_{(h,i)})^2(N_{k(i,f)})^2}{c_kp_{k_i}}-\sum_{k=1}^{K}\frac{((M^kN_k)_{(h,f)})^2}{c_k}.
	\end{eqnarray}
	
\end{lemma}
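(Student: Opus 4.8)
The plan is to work entrywise from the explicit form of the estimator in \eqref{3.3} and reduce everything to the first and second moments of a single sampled outer product. Fix the entry position $(h,f)$. For each block $k$ and each draw $t\in\{1,\dots,c_k\}$, I would introduce the scalar random variable $X_{k,t}=\frac{M^k_{(h,i_t)}N_{k(i_t,f)}}{c_k p_{k_{i_t}}}$, where $i_t$ is sampled from $\{1,\dots,n_k\}$ with probability $p_{k_i}$, independently and with replacement. Then $(CD)_{(h,f)}=\sum_{k=1}^{K}\sum_{t=1}^{c_k}X_{k,t}$. The structural fact I would emphasize up front is that, within a fixed block, the $X_{k,t}$ are independent and identically distributed across $t$, and that the draws used in different blocks are mutually independent because Algorithm~\ref{SABMM } runs BasicMatrixMultiplication separately on each block. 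This independence is exactly what makes the variance additive with no cross-terms.

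For the unbiasedness claim \eqref{3.6}, I would compute the expectation of a single term by averaging over the sampling index:
\begin{eqnarray*}
\mathrm{E}[X_{k,t}]=\sum_{i=1}^{n_k}p_{k_i}\cdot\frac{M^k_{(h,i)}N_{k(i,f)}}{c_k p_{k_i}}=\frac{1}{c_k}\sum_{i=1}^{n_k}M^k_{(h,i)}N_{k(i,f)}=\frac{(M^kN_k)_{(h,f)}}{c_k}.
\end{eqnarray*}
Summing over the $c_k$ identical terms in block $k$ gives $(M^kN_k)_{(h,f)}$, and then summing over $k$ and invoking the block decomposition \eqref{2.3} yields $\mathrm{E}[(CD)_{(h,f)}]=\sum_{k=1}^{K}(M^kN_k)_{(h,f)}=(MN)_{(h,f)}$, as required.

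For the variance \eqref{3.7}, I would use the independence noted above to write $\mathrm{Var}[(CD)_{(h,f)}]=\sum_{k=1}^{K}\sum_{t=1}^{c_k}\mathrm{Var}[X_{k,t}]$. The second moment of a single term is $\mathrm{E}[X_{k,t}^2]=\frac{1}{c_k^2}\sum_{i=1}^{n_k}\frac{(M^k_{(h,i)})^2(N_{k(i,f)})^2}{p_{k_i}}$, so that $\mathrm{Var}[X_{k,t}]=\frac{1}{c_k^2}\big[\sum_{i=1}^{n_k}\frac{(M^k_{(h,i)})^2(N_{k(i,f)})^2}{p_{k_i}}-((M^kN_k)_{(h,f)})^2\big]$ after subtracting the square of the mean computed above. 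Multiplying by $c_k$ (the number of i.i.d. draws in block $k$) cancels one power of $c_k$, and summing over all blocks delivers exactly the two sums in \eqref{3.7}.

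This argument is essentially a block-structured version of the standard variance analysis of the BasicMatrixMultiplication estimator, so no single step is deep. The only place that requires genuine care—and the step I would treat as the main obstacle—is the rigorous justification of the variance decomposition: I must verify both that the within-block draws are i.i.d. (so that $\mathrm{Var}$ distributes over $t$ with no covariance contributions) and that the across-block draws are independent (so that $\mathrm{Var}$ distributes over $k$). Once this independence bookkeeping is pinned down, the expectation and second-moment computations are routine, and the result follows by linearity and summation.
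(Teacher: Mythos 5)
Your proposal is correct and is precisely the argument the paper intends: the paper's proof of Lemma \ref{lem3.1} simply defers to the entrywise first- and second-moment computation of \cite[Lemma 3]{Drineas2006}, which is exactly your calculation of $\mathrm{E}[X_{k,t}]$ and $\mathrm{Var}[X_{k,t}]$, extended to blocks via the within-block i.i.d. sampling and across-block independence that you correctly identify as the only additional ingredient. Nothing in your write-up deviates from or adds a gap relative to the paper's route.
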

\begin{proof}
	The proof can be completed easily along the line of  the proof  of \cite[Lemma 3]{Drineas2006}.
\end{proof}

Now we present the asymptotic distribution of the estimation errors of matrix elements.

\begin{theorem}\label{thm3.1}
	Assume that Conditions \ \ref{C.1} and \  \ref{C.2} \ hold and let $c_s=\mathop{\min }\limits_{k=1,\cdots,K }c_k $.  Then the matrices $C$ and $D$ constructed by Algorithm \ref{SABMM } satisfy
	\begin{align}\label{thm2.1.1}
		\frac{(CD)_{(h,f)}-(MN)_{(h,f)}}{\sigma}\xrightarrow{L}N(0,1),\ for \ h=1,\cdots,m  \ and  \ f=1,\cdots,p,
	\end{align} where  $\xrightarrow{L}$ denotes  the  convergence in  distribution, and
	\begin{align*}
		\sigma^2=\sum_{k=1}^{K}\sum_{i=1}^{n_k}\frac{(M^k_{(h,i)})^2(N_{k(i,f)})^2}{c_kp_{k_i}}-\sum_{k=1}^{K}\frac{((M^kN_k)_{(h,f)})^2}{c_k}=O_p((\sqrt{c_s})^{-1}).
	\end{align*}	
\end{theorem}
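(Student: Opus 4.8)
The plan is to fix an entry position $(h,f)$ and view $(CD)_{(h,f)}-(MN)_{(h,f)}$ as a normalized sum of independent, mean-zero random variables, so that a triangular-array central limit theorem applies. Concretely, for the $t$-th outer product drawn from the $k$-th block set
\begin{align*}
	X_{k,t}=\frac{M^k_{(h,i_t)}N_{k(i_t,f)}}{c_k p_{k_{i_t}}},
\end{align*}
so that $(CD)_{(h,f)}=\sum_{k=1}^{K}\sum_{t=1}^{c_k}X_{k,t}$. Because the indices $i_t$ are drawn independently and with replacement, both within a block and across the $K$ blocks (Algorithm \ref{SABMM } calls the BasicMatrixMultiplication routine separately on each block), the whole family $\{X_{k,t}\}$ is independent, and for each fixed $k$ the $c_k$ variables are identically distributed. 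A one-line computation gives $\mathrm{E}[X_{k,t}]=(M^kN_k)_{(h,f)}/c_k$; hence, by \eqref{3.6}, the centered sum $\sum_{k,t}(X_{k,t}-\mathrm{E}[X_{k,t}])$ equals $(CD)_{(h,f)}-(MN)_{(h,f)}$, and by \eqref{3.7} its total variance is exactly $\sigma^2$.

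Before invoking the limit theorem I would record the order of $\sigma^{2}$. Writing $\sigma^{2}=T_1-T_2$ with $T_1=\sum_{k}c_k^{-1}\sum_{i}(M^k_{(h,i)})^2(N_{k(i,f)})^2/p_{k_i}$ and $T_2=\sum_k c_k^{-1}((M^kN_k)_{(h,f)})^2\ge 0$, the bound $c_k^{-1}\le c_s^{-1/2}c_k^{-1/2}$ (valid since $c_k\ge c_s$) gives
\begin{align*}
	T_1\le \frac{1}{\sqrt{c_s}}\sum_{k=1}^{K}\frac{1}{\sqrt{c_k}}\sum_{i=1}^{n_k}\frac{(M^k_{(h,i)})^2(N_{k(i,f)})^2}{p_{k_i}}=\frac{1}{\sqrt{c_s}}\,O_p(1)
\end{align*}
by Condition \ref{C.2}. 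Since $0\le\sigma^2\le T_1$, this yields $\sigma^{2}=O_p((\sqrt{c_s})^{-1})$, the stated order.

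For the distributional limit I would verify the Lindeberg condition for the triangular array through Lyapunov's criterion with exponent three, i.e.\ show $\sigma^{-3}\sum_{k,t}\mathrm{E}\,|X_{k,t}-\mathrm{E}[X_{k,t}]|^{3}\to 0$. Using the triangle inequality $|X_{k,t}-\mathrm{E}[X_{k,t}]|\le |M^k_{(h,i_t)}||N_{k(i_t,f)}|/(c_kp_{k_{i_t}})+|(M^kN_k)_{(h,f)}|/c_k$ together with $(a+b)^3\le 4(a^3+b^3)$, each third moment is dominated by $c_k^{-3}\sum_i |M^k_{(h,i)}|^3|N_{k(i,f)}|^3/p_{k_i}^2$ plus an analogous term in $|(M^kN_k)_{(h,f)}|^3$. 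Summing the $c_k$ identical copies in block $k$ and using $p_{k_i}\le 1$ together with the elementary inequality $\sum_i a_i^3\le(\sum_i a_i)^3$ for $a_i=|M^k_{(h,i)}||N_{k(i,f)}|/p_{k_i}\ge 0$ (and $|(M^kN_k)_{(h,f)}|\le\sum_i a_i$), the numerator $\sum_{k,t}\mathrm{E}|X_{k,t}-\mathrm{E}[X_{k,t}]|^3$ is bounded above by a constant multiple of the left-hand side of \eqref{3.4}, which is $o_p(1)$ by Condition \ref{C.1}.

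The main obstacle is the matching of rates in this ratio: since $\sigma\to 0$ as $c_s\to\infty$, proving the third-moment sum is $o_p(1)$ does not by itself produce a vanishing Lyapunov ratio — one must show it decays faster than $\sigma^{3}$. This is exactly why the exponents in \eqref{3.4} and \eqref{3.5} are calibrated as they are: the factor $c_k^{-2}$ in Condition \ref{C.1}, weighed against the factor $c_k^{-1}$ governing $\sigma^2$ and the factor $c_k^{-1/2}$ in Condition \ref{C.2}, furnishes an extra gain of order $c_s^{-1/2}$, so that the numerator is indeed $o_p(\sigma^3)$ and the Lyapunov ratio tends to zero. Feeding this into the Lindeberg--Feller theorem gives $(\,(CD)_{(h,f)}-(MN)_{(h,f)}\,)/\sigma\xrightarrow{L}N(0,1)$ for every $(h,f)$, which is \eqref{thm2.1.1}.
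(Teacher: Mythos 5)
Your proposal follows the paper's own proof almost step for step: the paper likewise centers the sampled terms into independent mean-zero summands $\eta_{k(t)}$, obtains $\mathrm{E}[\eta_{k(t)}]=0$ and the variance from Lemma \ref{lem3.1}, proves $\sigma^2=O_p((\sqrt{c_s})^{-1})$ by exactly your inequality $c_k^{-1}\le c_s^{-1/2}c_k^{-1/2}$ followed by Condition \ref{C.2}, reduces the third-moment sum to a constant multiple of the left-hand side of \eqref{3.4} via Condition \ref{C.1}, and then invokes the Lindeberg--Feller theorem \cite[Proposition 2.27]{Van1998}. The only structural difference is cosmetic: you check Lyapunov's criterion with exponent three, while the paper verifies the Lindeberg condition directly through $\mathrm{E}[\eta^2_{k(t)}I(|\eta_{k(t)}|>\zeta)]\le \zeta^{-1}\mathrm{E}[|\eta_{k(t)}|^3]$; these amount to the same estimate.

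The one place you depart from the paper is your final paragraph, and there you claim more than the hypotheses can deliver. You are right that, since $\sigma\to 0$, showing the third-moment sum is $o_p(1)$ is not enough --- the Lyapunov ratio needs it to be $o_p(\sigma^3)$. But your assertion that the exponents in \eqref{3.4} and \eqref{3.5} are ``calibrated'' to furnish this gain cannot be derived from Conditions \ref{C.1} and \ref{C.2}: both are one-sided upper bounds and give no lower bound on $\sigma^2$ whatsoever. Indeed, $\sigma^2$ can be exactly zero while both conditions hold (take all outer products within each block identical and $p_{k_i}$ uniform, so each block estimate is deterministic), in which case no inequality of the form ``numerator $=o_p(\sigma^3)$'' is available and the normalized statement itself degenerates. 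For what it is worth, the paper's own proof does not close this hole either: it verifies the unnormalized condition $\sum_{k,t}\mathrm{E}[\eta^2_{k(t)}I(|\eta_{k(t)}|>\zeta)]=o_p(1)$ and cites \cite[Proposition 2.27]{Van1998}, whereas that theorem must be applied to the array $\eta_{k(t)}/\sigma$, whose Lindeberg condition reads $\sigma^{-2}\sum_{k,t}\mathrm{E}[\eta^2_{k(t)}I(|\eta_{k(t)}|>\zeta\sigma)]\to 0$ --- strictly stronger when $\sigma\to 0$. So you have correctly located a genuine soft spot that the paper glosses over, and your write-up is more candid about it, but your resolution is an assertion rather than a proof; an airtight argument (yours or the paper's) would need an additional non-degeneracy assumption, e.g.\ a lower bound of the form $\sigma^2\ge C c_s^{-1}$, under which the normalized Lyapunov ratio can actually be shown to vanish.
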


\begin{proof}
	Note that
	\begin{align*}
		(CD)_{(h,f)}-(MN)_{(h,f)}&=\sum_{k=1}^{K}\sum_{t=1}^{c_k}(\frac{M^{k(i_t)}N_{k(i_t)}}{c_kp_{k_{i_t}}})_{(h,f)}-\sum_{k=1}^{K}\sum_{i=1}^{n_k}(M^{k(i)}N_{k(i)})_{(h,f)} \\
		&=\sum_{k=1}^{K}\sum_{t=1}^{c_k}[(\frac{M^{k(i_t)}N_{k(i_t)}}{c_kp_{k_{i_t}}})_{(h,f)}-\sum_{i=1}^{n_k}(\frac{M^{k(i)}N_{k(i)}}{c_k})_{(h,f)}].
	\end{align*}
	Let $\eta_{k(t)}=(\frac{M^{k(i_t)}N_{k(i_t)}}{c_kp_{k_{i_t}}})_{(h,f)}-\sum_{i=1}^{n_k}(\frac{M^{k(i)}N_{k(i)}}{c_k})_{(h,f)}$ with $k=1,\cdots,K$ and $t=1,\cdots,c_k$. Thus, based on Lemma 
	\ref{lem3.1},	it is easy to deduce that
	\begin{align}\label{thm2.1.2}
		\mathrm{E}[\eta_{k(t)}]=0
	\end{align}
	and 
	\begin{align}\label{thm2.1.3}	
		\mathrm{Var}[\eta_{k(t)}]=\sum_{i=1}^{n_k}\frac{(M^k_{(h,i)})^2(N_{k(i,f)})^2}{c^2_kp_{k_i}}-\frac{((M^kN_k)_{(h,f)})^2}{c^2_k}.
	\end{align}
	Moreover, considering that $\eta_{k(t)}$ are independent  for the given matrices $M$ and $N$, by the basic triangle inequality and the Cauchy-Schwarz inequality,  for any $\zeta>0$, we have	
	\begin{align}\label{thm2.1.4}	
		&\sum_{k=1}^{K}\sum_{t=1}^{c_k}\mathrm{E}{[\eta^2_{k(t)}I(|\eta_{k(t)}|>\zeta)|M,N]}
		\leq\sum_{k=1}^{K}\sum_{t=1}^{c_k}\frac{1}{\zeta}\mathrm{E}{[\eta^3_{k(t)}|M,N]} \notag \\
		&\leq\sum_{k=1}^{K}\sum_{t=1}^{c_k}\frac{1}{\zeta}\mathrm{E}{[|(\frac{M^k_{(h,i_t)}N_{k(i_t,f)}}{c_kp_{k_{i_t}}}-\sum_{i=1}^{n_k}\frac{M^k_{(h,i)}N_{k(i,f)}}{c_k})^3|]} \notag \\
		&\leq\sum_{k=1}^{K}\sum_{t=1}^{c_k}\frac{1}{\zeta c^3_k}[\sum_{i=1}^{n_k} \frac{|M^k_{(h,i)}|^3|N_{k(i,f)}|^3}{p^2_{k_{i}}}+(\sum_{i=1}^{n_k}|M^k_{(h,i)}||N_{k(i,f)}|)^3 \notag \\
		&\quad +3(\sum_{i=1}^{n_k}|M^k_{(h,i)}||N_{k(i,f)}|)(\sum_{i=1}^{n_k}|M^k_{(h,i)}||N_{k(i,f)}|)^2  +3\sum_{i=1}^{n_k} \frac{|M^k_{(h,i)}|^2|N_{k(i,f)}|^2}{p_{k_{i}}}(\sum_{i=1}^{n_k}|M^k_{(h,i)}||N_{k(i,f)}|)] \notag\\
		&\leq\sum_{k=1}^{K}\sum_{t=1}^{c_k}\frac{8}{\zeta c^3_k}[(\sum_{i=1}^{n_k}\frac{|M^k_{(h,i)}||N_{k(i,f)}|}{p_{k_{i}}})^3]
		=o_p(1),	
	\end{align}
	where the last equality  is from  Condition \ref{C.1}.  In addition, by \eqref{thm2.1.3}, we have  
	\begin{align}
		\sigma^2&=\sum_{k=1}^{K}\sum_{t=1}^{c_k}\mathrm{Var}[\eta_{k(t)}]\notag\\
		&=\sum_{k=1}^{K}\sum_{i=1}^{n_k}\frac{(M^k_{(h,i)})^2(N_{k(i,f)})^2}{c_kp_{k_i}}-\sum_{k=1}^{K}\frac{((M^kN_k)_{(h,f)})^2}{c_k} \notag\\
		&\leq\sum_{k=1}^{K}\sum_{i=1}^{n_k}\frac{(M^k_{(h,i)})^2(N_{k(i,f)})^2}{c_kp_{k_i}} \notag\\
		&\leq\frac{1}{\sqrt{ c_s}}\sum_{k=1}^{K}\sum_{i=1}^{n_k}\frac{(M^k_{(h,i)})^2(N_{k(i,f)})^2}{\sqrt{c_k}p_{k_i}}   \notag\\
		&=O_p((\sqrt{c_s})^{-1}), \label{thm2.1.5}
	\end{align}
	where the last inequality is derived by noting $c_s=\mathop{\min }\limits_{k=1,\cdots,K }c_k$ 
	and the last equality is based on Condition \ref{C.2}. 	
	Thus, 	combining \eqref{thm2.1.2}  
	\eqref{thm2.1.4} and 	\eqref{thm2.1.5},  by the  Lindeberg-Feller central limit theorem  \cite[Proposition 2.27]{Van1998}, we get \eqref{thm2.1.1}.

\end{proof}

\begin{remark}\label{rem2.1}
	When $c_s
	\rightarrow \infty$, the variance $\sigma^2$ can be negligible.
\end{remark}

Combining the A-optimal design criterion 
and the sum of  asymptotic variances of  elements, we can obtain the  optimal  sampling probabilities ${\{p_{k_i}\}}^{n_k}_{i=1}$ with $k=1,\cdots,K$ and the optimal sampling block sizes ${\{c_k\}}^K_{k=1}$ for Algorithm \ref{SABMM }. 

\begin{theorem}\label{thm3.2}
	For Algorithm \ref{SABMM }, 	the sum of the asymptotic variances,
	\begin{align*}
		\sum_{h = 1}^{m}\sum_{f = 1}^{p}\sigma^2
	\end{align*}
	attains its minimum 
	when
	\begin{eqnarray}\label{3.8}
		p_{k_i}=\frac{\|M^{k(i)}\|_2\|N_{k(i)}\|_2}{\sum_{i = 1}^{n_k}\|M^{k(i)}\|_2\|N_{k(i)}\|_2},\  for \ k=1,\cdots,K \ and \ i=1,\cdots,n_k,
	\end{eqnarray}
	and
	\begin{eqnarray}\label{3.9}
		c_{k}=c\frac{((\sum_{i = 1}^{n_k}\|M^{k(i)}\|_2\|N_{k(i)}\|_2)^2-\|M^kN_k\|^2_F)^{\frac{1}{2}}}{\sum_{k = 1}^{K}((\sum_{i = 1}^{n_k}\|M^{k(i)}\|_2\|N_{k(i)}\|_2)^2-\|M^kN_k\|^2_F)^{\frac{1}{2}}}, \ for \  k=1,\cdots,K.
	\end{eqnarray}	
	
\end{theorem}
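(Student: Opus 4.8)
The plan is to first reduce the double sum $\sum_{h=1}^{m}\sum_{f=1}^{p}\sigma^2$ to an explicit function of the probabilities $\{p_{k_i}\}$ and the block sizes $\{c_k\}$, and then carry out the constrained minimization in two decoupled stages. Starting from the expression for $\sigma^2$ in \eqref{3.7}, I would interchange the order of summation so that the sums over $h$ and $f$ act directly on the squared entries. The key identities are $\sum_{h=1}^{m}(M^k_{(h,i)})^2=\|M^{k(i)}\|_2^2$ and $\sum_{f=1}^{p}(N_{k(i,f)})^2=\|N_{k(i)}\|_2^2$, together with $\sum_{h=1}^{m}\sum_{f=1}^{p}((M^kN_k)_{(h,f)})^2=\|M^kN_k\|_F^2$. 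These collapse the objective to
\[
\sum_{h=1}^{m}\sum_{f=1}^{p}\sigma^2=\sum_{k=1}^{K}\frac{1}{c_k}\left[\sum_{i=1}^{n_k}\frac{\|M^{k(i)}\|_2^2\|N_{k(i)}\|_2^2}{p_{k_i}}-\|M^kN_k\|_F^2\right],
\]
in which the subtracted term inside the bracket no longer depends on the probabilities.

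Next I would minimize over the probabilities with the block sizes held fixed. Since the positive factor $1/c_k$ and the constant $\|M^kN_k\|_F^2$ play no role in the location of the minimizer, for each block $k$ it suffices to minimize $\sum_{i=1}^{n_k}\|M^{k(i)}\|_2^2\|N_{k(i)}\|_2^2/p_{k_i}$ subject to $\sum_{i=1}^{n_k}p_{k_i}=1$. Applying the Cauchy--Schwarz inequality in the form $\big(\sum_i a_i\big)^2\le\big(\sum_i a_i^2/p_{k_i}\big)\big(\sum_i p_{k_i}\big)$ with $a_i=\|M^{k(i)}\|_2\|N_{k(i)}\|_2$ (equivalently, via a Lagrange multiplier), the minimum is attained exactly at the probabilities \eqref{3.8}, and the minimal value of the bracketed sum is $\big(\sum_{i=1}^{n_k}\|M^{k(i)}\|_2\|N_{k(i)}\|_2\big)^2$. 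Crucially, this optimizer does not involve the $c_k$, so the two minimizations genuinely decouple.

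Substituting these optimal probabilities reduces the objective to $\sum_{k=1}^{K}b_k/c_k$, where $b_k=\big(\sum_{i=1}^{n_k}\|M^{k(i)}\|_2\|N_{k(i)}\|_2\big)^2-\|M^kN_k\|_F^2$. I would note that $b_k\ge 0$, since $\|M^kN_k\|_F=\|\sum_i M^{k(i)}N_{k(i)}\|_F\le\sum_i\|M^{k(i)}\|_2\|N_{k(i)}\|_2$ by the triangle inequality and $\|M^{k(i)}N_{k(i)}\|_F=\|M^{k(i)}\|_2\|N_{k(i)}\|_2$; this guarantees $\sqrt{b_k}$ is real. I would then minimize $\sum_{k=1}^{K}b_k/c_k$ over the block sizes subject to $\sum_{k=1}^{K}c_k=c$, again by Cauchy--Schwarz (or a Lagrange multiplier), which yields $c_k\propto\sqrt{b_k}$ and hence exactly \eqref{3.9}.

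I do not anticipate a genuinely hard step: both stages are standard constrained minimizations solved by the same Cauchy--Schwarz/Lagrange argument. The one point deserving care is justifying that the minimization over $\{p_{k_i}\}$ and over $\{c_k\}$ \emph{decouples} --- this rests on the observation that, after the simplification above, the probability-dependent part of each summand is multiplied only by the positive constant $1/c_k$, so the optimal probabilities are the same regardless of the block sizes, allowing the block-size optimization to be performed afterward on the reduced objective. A secondary caveat is that \eqref{3.9} treats the $c_k$ as continuous and ignores the integrality constraint and the bound $c_k\le n_k$; this is presumably deferred to the modified block sizes discussed in the following section.
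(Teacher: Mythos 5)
Your proposal is correct and follows essentially the same route as the paper: both collapse the sums over $h,f$ to the explicit objective in $\{p_{k_i}\}$ and $\{c_k\}$, then apply the Cauchy--Schwarz inequality twice (once to pin down the probabilities, once to pin down the block sizes), with the same equality conditions yielding \eqref{3.8} and \eqref{3.9}. Your two-stage decoupled presentation, the explicit triangle-inequality justification that $b_k\ge 0$, and the remark on integrality are minor refinements of the paper's chained-inequality argument, not a different method.
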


\begin{proof}
	
	Considering 
	\begin{align*}
		(\sum_{i = 1}^{n_k}\|M^{k(i)}\|_2\|N_{k(i)}\|_2)^2-\|M^kN_k\|^2_F\geq 0 	
	\end{align*}
	and by the  Cauchy-Schwarz inequality,
	it is easy to get	
	\begin{align}\label{thm2.2.1}
		\sum_{h = 1}^{m}\sum_{f = 1}^{p}\sigma^2&=\sum_{k=1}^{K}\sum_{i=1}^{n_k}\frac{\|M^{k(i)}\|^2_2\|N_{k(i)}\|^2_2}{c_kp_{k_i}}-\sum_{k=1}^{K}\frac{\|M^kN_k\|^2_F}{c_k} \notag\\
		&=\sum_{k=1}^{K}\sum_{i=1}^{n_k}p_{k_i}\sum_{i=1}^{n_k}\frac{\|M^{k(i)}\|^2_2\|N_{k(i)}\|^2_2}{c_kp_{k_i}}-\sum_{k=1}^{K}\frac{\|M^kN_k\|^2_F}{c_k} \notag \\
		&\geq\sum_{k=1}^{K}\frac{1}{c_k}(\sum_{i=1}^{n_k}\|M^{k(i)}\|_2\|N_{k(i)}\|_2)^2-\sum_{k=1}^{K}\frac{\|M^kN_k\|^2_F}{c_k} \\
		&=\sum_{k=1}^{K}\frac{c_k}{c}\sum_{k=1}^{K}\frac{1}{c_k}[(\sum_{i=1}^{n_k}\|M^{k(i)}\|_2\|N_{k(i)}\|_2)^2-\|M^kN_k\|^2_F] \notag\\
		&\geq[\sum_{k=1}^{K}\frac{1}{\sqrt{c}}((\sum_{i=1}^{n_k}\|M^{k(i)}\|_2\|N_{k(i)}\|_2)^2-\|M^kN_k\|^2_F)^\frac{1}{2}]^2, \notag
	\end{align}
	where the equality  in \eqref{thm2.2.1}  holds  if and only if  $p_{k_i}$ are proportional to $\|M^{k(i)}\|_2\|N_{k(i)}\|_2$, i.e., $p_{k_i}=W_1\|M^{k(i)}\|_2\|N_{k(i)}\|_2$ for some constant $W_1\geq0$, and the  equality in the last inequality holds if and only if  $c_k=W_2[(\sum_{i=1}^{n_k}\|M^{k(i)}\|_2\|N_{k(i)}\|_2)^2-\|M^kN_k\|^2_F]^\frac{1}{2}$ for some $W_2\geq0$. Thus, considering  $\sum_{k=1}^{K}c_k=c$ and $\sum_{i=1}^{n_k}p_{k_i}=1$,   the desired results are derived.	
\end{proof}

\begin{remark}\label{rem2.2}
	It is not a complicated matter to find that
	\begin{align*}
		\sum_{h = 1}^{m}\sum_{f = 1}^{p}\sigma^2=\sum_{h = 1}^{m}\sum_{f = 1}^{p}\mathrm{Var}[(CD)_{(h,f)}]=\mathrm{E}{[\|MN-CD\|^2_F]},
	\end{align*}
	hence, the statistical criterion in Theorem \ref{thm3.2} for getting the optimal sampling probabilities and  sampling block sizes is equivalent to the optimization criterion used in \cite{Drineas2006}.
\end{remark}

\begin{remark}\label{rem2.3}
	Supposing $v_k\sum_{i=1}^{n_k}\|M^{k(i)}\|_2\|N_{k(i)}\|_2=\|M^kN_k\|_F$ for $0\leq v_k\leq1$, and combining  \eqref{3.8} and \eqref{3.9},  the sum of  asymptotic
	variances of  elements can be rewritten as
	\begin{align}\label{3.10}
		\sum_{h = 1}^{m}\sum_{f = 1}^{p}\sigma^2
		=\frac{1}{c}[\sum_{k=1}^{K}(1-v^2_k)^{\frac{1}{2}}\sum_{i=1}^{n_k}\|M^{k(i)}\|_2\|N_{k(i)}\|_2]^2.
	\end{align}
\end{remark}		

Next, we present the error bounds of the estimation obtained by Algorithm \ref{SABMM }. To make the analysis more general, we consider a set of sampling probabilities $\{p_{k_i}\}^{n_k}_{i=1}$ such that $p_{k_i}\geq\frac{\beta\|M^{k(i)}\|_2\|N_{k(i)}\|_2}{\sum_{i = 1}^{n_k}\|M^{k(i)}\|_2\|N_{k(i)}\|_2}$ with a positive constant $\beta\leq 1 $, which are named as the  nearly optimal probabilities. 

\begin{theorem}\label{thm3.3}
	Assume $v_k\sum_{i=1}^{n_k}\|M^{k(i)}\|_2\|N_{k(i)}\|_2=\|M^kN_k\|_F$ for $0\leq v_k\leq1$ and $\theta_1\leq1-v^2_k\leq\theta_2$ with $0\leq\theta_1\leq\theta_2\leq1$ and  $k=1,\cdots,K$,  and let  $\varphi=\frac{(\theta_2-\theta_1\beta+\theta_2\theta_1\beta)^\frac{1}{2}}{(\theta_2\theta_1)^{1/4}}$. Then, for Algorithm \ref{SABMM } applied  with $p_{k_i}\geq\frac{\beta\|M^{k(i)}\|_2\|N_{k(i)}\|_2}{\sum_{i = 1}^{n_k}\|M^{k(i)}\|_2\|N_{k(i)}\|_2}$ for $\beta\leq 1$  and 
	$c_{k}$ in \eqref{3.9}, the sum of the asymptotic variances satisfies 
	
	\begin{eqnarray}\label{3.11}
		\sum_{h = 1}^{m}\sum_{f = 1}^{p}\sigma^2\leq\frac{\varphi^2}{\beta c}\|M\|^2_F\|N\|^2_F,
	\end{eqnarray}
	and by setting $\delta\in(0,1)$ and $\eta=\varphi+(\frac{\theta_2}{\theta_1})^\frac{1}{2}\sqrt{(8/\beta)log(1/\delta)}$, 
	\begin{eqnarray}\label{3.12}
		{\|MN-{C}{D}\|^2_F }\leq{ \frac{\eta^2}{\beta c}\|M\|^2_F\|N\|^2_F}
	\end{eqnarray}
	holds with the probability at least $1-\delta$.
\end{theorem}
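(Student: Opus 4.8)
The plan is to treat the two displays separately: \eqref{3.11} is a deterministic estimate of the A-optimal variance, and \eqref{3.12} is a concentration statement that uses \eqref{3.11} to control the mean. Throughout I would abbreviate $s_k=\sum_{i=1}^{n_k}\|M^{k(i)}\|_2\|N_{k(i)}\|_2$, so that the hypotheses read $\|M^kN_k\|_F=v_ks_k$ with $1-v_k^2\in[\theta_1,\theta_2]$ (equivalently $v_k^2\in[1-\theta_2,1-\theta_1]$), and I would record at the outset the blockwise Cauchy--Schwarz bound $\sum_{k=1}^{K}s_k\le\|M\|_F\|N\|_F$.

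\emph{The bound \eqref{3.11}.} First I would start from the variance in Theorem \ref{thm3.1}, $\sum_{h,f}\sigma^2=\sum_k\sum_i\frac{\|M^{k(i)}\|_2^2\|N_{k(i)}\|_2^2}{c_kp_{k_i}}-\sum_k\frac{\|M^kN_k\|_F^2}{c_k}$. The nearly optimal hypothesis gives $\|M^{k(i)}\|_2^2\|N_{k(i)}\|_2^2/p_{k_i}\le s_k\|M^{k(i)}\|_2\|N_{k(i)}\|_2/\beta$, so the positive sum is at most $\sum_k s_k^2/(\beta c_k)$ while the subtracted sum equals $\sum_k v_k^2 s_k^2/c_k$. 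Substituting \eqref{3.9}, which here is $c_k=c\,s_k(1-v_k^2)^{1/2}/\sum_l s_l(1-v_l^2)^{1/2}$, I would bound the positive part from above by $\frac{\theta_2^{1/2}}{\beta c\,\theta_1^{1/2}}(\sum_k s_k)^2$ and the subtracted part from below by $\frac{\theta_1^{1/2}(1-\theta_2)}{c\,\theta_2^{1/2}}(\sum_k s_k)^2$, using only the range $1-v_k^2\in[\theta_1,\theta_2]$ and then $\sum_k s_k\le\|M\|_F\|N\|_F$. Over the common denominator $\beta c(\theta_1\theta_2)^{1/2}$ the numerator collapses to $\theta_2-\theta_1\beta+\theta_1\theta_2\beta=\varphi^2(\theta_1\theta_2)^{1/2}$, which is precisely \eqref{3.11}; this is where the specific constant $\varphi$ is produced.

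\emph{The bound \eqref{3.12}.} Here I would run a bounded-differences (McDiarmid) argument on $F:=\|MN-CD\|_F$ viewed as a function of the $c=\sum_kc_k$ independent indices sampled by Algorithm \ref{SABMM }. Changing one sample in block $k$ alters $CD$ by $\frac{1}{c_k}(\frac{M^{k(b)}N_{k(b)}}{p_{k_b}}-\frac{M^{k(a)}N_{k(a)}}{p_{k_a}})$, so by the reverse triangle inequality for $\|\cdot\|_F$ together with $\|M^{k(i)}\|_2\|N_{k(i)}\|_2/p_{k_i}\le s_k/\beta$ the bounded difference is at most $2s_k/(\beta c_k)$. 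Consequently the sum of squared differences is $\sum_k c_k(2s_k/(\beta c_k))^2=\frac{4}{\beta^2}\sum_k s_k^2/c_k$, the \emph{same} quantity estimated above, and with \eqref{3.9} and the $\theta$-range it is at most $\frac{4}{\beta^2 c}(\theta_2/\theta_1)^{1/2}\|M\|_F^2\|N\|_F^2$. McDiarmid's inequality then gives $F\le\mathrm{E}[F]+\gamma$ with probability at least $1-\delta$, where in the standard normalization $\gamma$ may be taken as $(\theta_2/\theta_1)^{1/2}\sqrt{(8/\beta)\log(1/\delta)}\,\|M\|_F\|N\|_F/\sqrt{\beta c}$. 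Finally I would bound the mean through Remark \ref{rem2.2} and \eqref{3.11}, $\mathrm{E}[F]\le(\mathrm{E}[F^2])^{1/2}=(\sum_{h,f}\sigma^2)^{1/2}\le\varphi\,\|M\|_F\|N\|_F/\sqrt{\beta c}$. Adding the two terms, factoring out $\|M\|_F\|N\|_F/\sqrt{\beta c}$ and squaring yields $\|MN-CD\|_F^2\le\eta^2\|M\|_F^2\|N\|_F^2/(\beta c)$ with $\eta=\varphi+(\theta_2/\theta_1)^{1/2}\sqrt{(8/\beta)\log(1/\delta)}$.

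\emph{Main obstacle.} The estimate \eqref{3.11} is a careful but essentially mechanical chain once the positive and subtracted sums are handled in opposite directions. The real difficulty is \eqref{3.12}: one must verify the bounded-differences property of $\|MN-CD\|_F$ and, above all, carry the block-dependent Lipschitz constants $2s_k/(\beta c_k)$ through the substitution \eqref{3.9} so that the sum of squares reduces to a clean multiple of $\|M\|_F^2\|N\|_F^2$. Tracking the $\theta$-range bookkeeping so as to land on exactly $\eta$, and tying the concentration back to the A-optimal variance via Remark \ref{rem2.2}, is the step I expect to require the most care.
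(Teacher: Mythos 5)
Your proposal is correct and follows essentially the same route as the paper's proof in Appendix~\ref{app.A}: the same opposite-direction treatment of the positive and subtracted variance terms under \eqref{3.9} to produce $\varphi$ in \eqref{3.11}, and a bounded-differences (Hoeffding--Azuma/McDiarmid) argument for \eqref{3.12} with the mean controlled via Jensen's inequality, Remark \ref{rem2.2} and \eqref{3.11}. The only cosmetic difference is that you carry the block-dependent difference bounds $2s_k/(\beta c_k)$ (in your notation) into the sum of squares, whereas the paper uses the uniform bound $\Delta=\frac{2}{\beta c}(\theta_2/\theta_1)^{1/2}\|M\|_F\|N\|_F$; both collapse to the same $\gamma$ and hence the same $\eta$.
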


\begin{proof}
	Similar to the proof  of \cite[Theorem 1]{Drineas2006},  we  can derive the desired results. The specific proof is presented in Appendix \ref{app.A}.
\end{proof}
	
	\section{Modification of the Optimal  Criterion}
	\label{sec.3}
	
Note that calculating \eqref{3.9} requires to figure out the matrix multiplication $M^kN_k$. This cost may be prohibitive for massive data. In this section, we develop two low-cost alternatives,  $\hat{c_k}$ and $\widetilde{c_k}$, to replace the optimal sampling block size $c_k$ in \eqref{3.9}. Besides, a two step algorithm is also provided with respect to $\widetilde{c_k}$.

\subsection{Modification with Adjusting Variance} \label{sec.3.1}

The size $\hat{c}_k$ is derived from a small modification in the  proof of Theorem \ref{thm3.2}. That is, we first let 	
\begin{align}\label{4.1}
	\sum_{h = 1}^{m}\sum_{f = 1}^{p}\sigma^2\notag 
	&=\sum_{k=1}^{K}\sum_{i=1}^{n_k}\frac{\|M^{k(i)}\|^2_2\|N_{k(i)}\|^2_2}{\hat{c}_kp_{k_i}}-\sum_{k=1}^{K}\frac{\|M^kN_k\|^2_F}{\hat{c}_k} \notag \\
	&\leq\sum_{k=1}^{K}\sum_{i=1}^{n_k}\frac{\|M^{k(i)}\|^2_2\|N_{k(i)}\|^2_2}{\hat{c}_kp_{k_i}},
\end{align}
and then find two sets  $\{\hat{c}_k\}^K_{k=1}$ and $\{p_{k_i}\}^{n_k}_{i=1}$  to make the above upper bound  achieve minimum. Similar to the proof of Theorem \ref{thm3.2}, we have 
\begin{align}\label{4.2}
	\hat{c}_k=c\frac{\sum_{i = 1}^{n_k}\|M^{k(i)}\|_2\|N_{k(i)}\|_2}{\sum_{k = 1}^{K}\sum_{i = 1}^{n_k}\|M^{k(i)}\|_2\|N_{k(i)}\|_2}
\end{align}
and  $p_{k_i}$  as in \eqref{3.8}.	
Obviously,  $\hat{c}_k$ is much easier to compute compared with  \eqref{3.9}.

Below we provide the asymptotic distribution  of the estimation errors of matrix elements and  probability error bound of $\hat{C}\hat{D}$ constructed by  putting  \eqref{3.8} and \eqref{4.2} into Algorithm \ref{SABMM }. 	The following conditions are first listed.

\begin{condition}\label{C.3}
	\begin{align*}
		\frac{1}{c^2}(\sum_{k = 1}^{K}\sum_{i = 1}^{n_k}\|M^{k(i)}\|_2\|N_{k(i)}\|_2)^3=o_p(1).
	\end{align*}
\end{condition}

\begin{condition}\label{C.4}
	\begin{align*}
		\frac{1}{\sqrt{c}}(\sum_{k = 1}^{K}\sum_{i = 1}^{n_k}\|M^{k(i)}\|_2\|N_{k(i)}\|_2)^2=O_p(1).
	\end{align*}
\end{condition} 		

\begin{theorem}\label{thm4.1}
	Assume  $\mu_1L\leq|M_{(h,f)}|\leq\mu_2L$ and $\mu_1L\leq|N_{(h,f)}|\leq\mu_2L$  for $\mu_1\geq0$, $\mu_2\geq0$ and $L\geq0$, and set $c_s=\mathop{\min }\limits_{k=1,\cdots,K }\hat{c_k}$.	
	If  Conditions \ref{C.3} and \ref{C.4} hold, then the matrices
	$\hat{C}$ and $\hat{D}$ constructed by Algorithm \ref{SABMM } with $p_{k_i}$ being as in \eqref{3.8} and $c_k=\hat{c}_k$ satisfy 
	\begin{align*}
		\frac{(\hat{C}\hat{D})_{(h,f)}-(MN)_{(h,f)}}{\sigma}\xrightarrow{L}N(0,1), for \ h=1,\cdots,m \ and \ f=1,\cdots,p,
	\end{align*}
	where  $\xrightarrow{L}$ denotes  the  convergence in  distribution, and
	\begin{align*}
		\sigma^2=\sum_{k = 1}^{K}\sum_{i = 1}^{n_k}\|M^{k(i)}\|_2\|N_{k(i)}\|_2[\sum_{k=1}^{K}\frac{1}{c}
		(\sum_{i=1}^{n_k}\frac{(M^k_{(h,i)})^2(N_{k(i,f)})^2}{\|M^{k(i)}\|_2\|N_{k(i)}\|_2}-\frac{((M^kN_k)_{(h,f)})^2}{\sum_{i = 1}^{n_k}\|M^{k(i)}\|_2\|N_{k(i)}\|_2})]
		=O_p((\sqrt{c_s})^{-1}).
	\end{align*}
\end{theorem}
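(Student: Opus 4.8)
The plan is to mirror the proof of Theorem~\ref{thm3.1} verbatim, specializing the generic quantities to the explicit choices \eqref{3.8} and \eqref{4.2}. Abbreviate $S_k=\sum_{i=1}^{n_k}\|M^{k(i)}\|_2\|N_{k(i)}\|_2$ and $S=\sum_{k=1}^{K}S_k$. The first thing I would record are the two algebraic identities that make the specialization collapse: since $p_{k_i}=\|M^{k(i)}\|_2\|N_{k(i)}\|_2/S_k$ and $\hat c_k=cS_k/S$, one has $\hat c_kp_{k_i}=c\|M^{k(i)}\|_2\|N_{k(i)}\|_2/S$ and $S_k/\hat c_k=S/c$. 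Exactly as in Theorem~\ref{thm3.1}, I would write $(\hat C\hat D)_{(h,f)}-(MN)_{(h,f)}=\sum_{k=1}^{K}\sum_{t=1}^{\hat c_k}\eta_{k(t)}$ with the same zero-mean, row-independent summands $\eta_{k(t)}$, whose individual variances are supplied by Lemma~\ref{lem3.1}. The conclusion will follow from the Lindeberg--Feller central limit theorem once (i) the exact form and the order of $\sigma^2$ and (ii) the Lindeberg condition are checked.

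Step (i) is routine algebra. Substituting the two identities into the variance expression of Theorem~\ref{thm3.1}, the factor $1/(\hat c_kp_{k_i})$ becomes $S/(c\|M^{k(i)}\|_2\|N_{k(i)}\|_2)$ and $1/\hat c_k$ becomes $S/(cS_k)$; pulling out $S/c$ reproduces verbatim the displayed formula for $\sigma^2$, whose leading factor is $S$. For its order I would discard the nonnegative subtracted term and use $(M^k_{(h,i)})^2\le\|M^{k(i)}\|_2^2$ and $(N_{k(i,f)})^2\le\|N_{k(i)}\|_2^2$ to obtain $\sigma^2\le(S/c)\sum_{k}S_k=S^2/c$. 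Writing $S^2/c=c^{-1/2}\,(S^2/\sqrt c)$ and invoking Condition~\ref{C.4} gives $\sigma^2=O_p(c^{-1/2})$; since $c_s=\min_k\hat c_k\le c$ implies $c^{-1/2}\le c_s^{-1/2}$, the claimed order $\sigma^2=O_p((\sqrt{c_s})^{-1})$ follows.

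The heart of the argument, and the step I expect to be the main obstacle, is the Lindeberg condition (ii); this is where the entrywise bounds $\mu_1L\le|M_{(h,f)}|,|N_{(h,f)}|\le\mu_2L$ enter. Rather than crudely cubing the inner sum as in \eqref{thm2.1.4}, which here would demand a far stronger hypothesis than Condition~\ref{C.3}, I would exploit a \emph{uniform} bound on the summands. Using $|M^k_{(h,i)}N_{k(i,f)}|\le\mu_2^2L^2$, $\|M^{k(i)}\|_2\|N_{k(i)}\|_2\ge\sqrt{mp}\,\mu_1^2L^2$, the identities above, and $|(M^kN_k)_{(h,f)}|\le S_k$, both pieces of $\eta_{k(t)}$ are of order $S/c$, so $\max_{k,t}|\eta_{k(t)}|\le C_0\,S/c$ with $C_0$ depending only on $m,p,\mu_1,\mu_2$. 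The factorization $\mathrm{E}[|\eta_{k(t)}|^3]\le\big(\max_{k,t}|\eta_{k(t)}|\big)\mathrm{E}[\eta_{k(t)}^2]$ then gives, for every $\zeta>0$,
\begin{align*}
\sum_{k=1}^{K}\sum_{t=1}^{\hat c_k}\mathrm{E}\big[\eta_{k(t)}^2 I(|\eta_{k(t)}|>\zeta)\mid M,N\big]
&\le\frac{1}{\zeta}\sum_{k=1}^{K}\sum_{t=1}^{\hat c_k}\mathrm{E}\big[|\eta_{k(t)}|^3\big] \\
&\le\frac{C_0}{\zeta}\,\frac{S}{c}\sum_{k=1}^{K}\sum_{t=1}^{\hat c_k}\mathrm{E}[\eta_{k(t)}^2]
=\frac{C_0}{\zeta}\,\frac{S}{c}\,\sigma^2
\le\frac{C_0}{\zeta}\,\frac{S^3}{c^2},
\end{align*}
which is $o_p(1)$ by Condition~\ref{C.3}. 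Combining this Lindeberg bound, the zero-mean property, and the order of $\sigma^2$ from step (i), the Lindeberg--Feller central limit theorem \cite[Proposition 2.27]{Van1998} yields the stated asymptotic normality. The delicate point to get right is precisely that the bounded-entry hypothesis is what allows the generic, and potentially unbounded, third-moment estimate of Theorem~\ref{thm3.1} to be replaced by the single aggregate quantity $S^3/c^2$ that Condition~\ref{C.3} forces to vanish.
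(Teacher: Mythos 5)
Your proposal is correct, and it follows the only route the paper itself indicates: the paper's ``proof'' of Theorem~\ref{thm4.1} is the single sentence that it can be completed along the lines of Theorem~\ref{thm3.1}, and your argument is precisely that program carried out --- the same decomposition into the zero-mean independent summands $\eta_{k(t)}$, the same appeal to Lemma~\ref{lem3.1}, and the same Lindeberg--Feller conclusion. Where you genuinely add value is the one step where a verbatim transcription would break down: the Lindeberg verification. Mimicking \eqref{thm2.1.4} directly (cubing the inner sum) produces terms of order $(S^2/c^2)\sum_k n_k^3 S_k$, which Conditions~\ref{C.3} and \ref{C.4} do not control; your replacement --- the uniform bound $|\eta_{k(t)}|\leq C_0 S/c$ obtained from the entrywise hypotheses, combined with $\mathrm{E}[|\eta_{k(t)}|^3]\leq (C_0S/c)\,\mathrm{E}[\eta_{k(t)}^2]$ and $\sum_{k,t}\mathrm{E}[\eta_{k(t)}^2]=\sigma^2\leq S^2/c$ --- is exactly what reduces the Lindeberg sum to $S^3/c^2$ and hence to Condition~\ref{C.3}. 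This makes explicit the role of the bounds $\mu_1L\leq|M_{(h,f)}|,|N_{(h,f)}|\leq\mu_2L$, which the paper assumes in the statement but never visibly uses. Two minor caveats: your constant $C_0$ divides by $\mu_1^2$, so the argument tacitly needs $\mu_1>0$ even though the statement only says $\mu_1\geq 0$ (a defect of the paper's hypothesis, not of your proof); and, like the paper's own proof of Theorem~\ref{thm3.1}, you verify the Lindeberg condition in unnormalized form, without dividing by $\sigma^2$, so your proof inherits exactly that (shared) looseness and no more.
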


\begin{proof}
	The proof can be completed along the line of  the proof of Theorem \ref{thm3.1}. 
\end{proof}

\begin{theorem}\label{thm4.2}
	Let  $\hat{\varphi}=(1-\beta(1-\theta_2))^\frac{1}{2}$ with $\theta_2$ given as in Theorem \ref{thm3.3}.
	Then, for  Algorithm \ref{SABMM } applied with $p_{k_i}\geq\frac{\beta\|M^{k(i)}\|_2\|N_{k(i)}\|_2}{\sum_{i = 1}^{n_k}\|M^{k(i)}\|_2\|N_{k(i)}\|_2}$ for  $\beta\leq 1$ and $c_k  =  \hat{c}_k$, the sum of the asymptotic variances satisfies
	\begin{eqnarray}\label{4.3}
		\sum_{h = 1}^{m}\sum_{f = 1}^{p}\sigma^2\leq\frac{\hat{\varphi}^2}{\beta c}\|M\|^2_F\|N\|^2_F,
	\end{eqnarray}
	and by setting $\delta\in(0,1)$  and $\eta=\hat{\varphi}+\sqrt{(8/\beta)log(1/\delta)}$, 
	\begin{eqnarray}\label{4.4}
		{\|MN-\hat{C}\hat{D}\|^2_F }\leq{ \frac{\eta^2}{\beta c}\|M\|^2_F\|N\|^2_F}
	\end{eqnarray}
	holds with the probability at least $1-\delta$.
\end{theorem}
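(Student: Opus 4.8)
The plan is to establish the two claims in the same two-stage fashion as Theorem \ref{thm3.3} (and, ultimately, \cite[Theorem 1]{Drineas2006}): first bound the aggregated asymptotic variance $\sum_{h,f}\sigma^2$, which by Remark \ref{rem2.2} equals $\mathrm{E}[\|MN-\hat{C}\hat{D}\|^2_F]$, and then upgrade this expectation bound to a high-probability bound via a bounded-difference martingale argument. Throughout I abbreviate $S_k=\sum_{i=1}^{n_k}\|M^{k(i)}\|_2\|N_{k(i)}\|_2$ and $S=\sum_{k=1}^{K}S_k$, so that the modified block size reads $\hat{c}_k=cS_k/S$, and I carry over the quantities $v_k$ and $\theta_2$ from the hypotheses of Theorem \ref{thm3.3}.

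For the variance bound \eqref{4.3}, I start from the summed form of \eqref{3.7},
\[\sum_{h=1}^m\sum_{f=1}^p\sigma^2=\sum_{k=1}^{K}\sum_{i=1}^{n_k}\frac{\|M^{k(i)}\|^2_2\|N_{k(i)}\|^2_2}{\hat{c}_k p_{k_i}}-\sum_{k=1}^{K}\frac{\|M^kN_k\|^2_F}{\hat{c}_k}.\]
The nearly optimal hypothesis $p_{k_i}\ge\beta\|M^{k(i)}\|_2\|N_{k(i)}\|_2/S_k$ gives $\|M^{k(i)}\|^2_2\|N_{k(i)}\|^2_2/p_{k_i}\le(S_k/\beta)\|M^{k(i)}\|_2\|N_{k(i)}\|_2$, so the first sum is at most $\sum_k S_k^2/(\beta\hat{c}_k)=\frac{S}{\beta c}\sum_k S_k=S^2/(\beta c)$. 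Using $\|M^kN_k\|_F=v_kS_k$, the second sum equals $\frac{S}{c}\sum_k v_k^2 S_k$, whence $\sum_{h,f}\sigma^2\le\frac{S}{\beta c}\sum_k S_k(1-\beta v_k^2)$. The constraint $1-v_k^2\le\theta_2$ forces $1-\beta v_k^2\le1-\beta(1-\theta_2)=\hat{\varphi}^2$, and Cauchy--Schwarz gives $S\le\|M\|_F\|N\|_F$; combining these two facts yields \eqref{4.3}.

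For the probability bound \eqref{4.4}, write $F=\|MN-\hat{C}\hat{D}\|_F$ as a function of the $c$ independent sampled indices $\{i_t\}$ across all blocks. The key observation is that each sampled outer product has Frobenius norm $\|M^{k(i_t)}N_{k(i_t)}\|_F/(\hat{c}_k p_{k_{i_t}})\le S_k/(\beta\hat{c}_k)=S/(\beta c)$, a bound identical for every block precisely because $\hat{c}_k\propto S_k$. Replacing one sample therefore perturbs $\hat{C}\hat{D}$, and hence $F$, by at most $\Delta=2S/(\beta c)$. Feeding the $c$ differences into the Hoeffding--Azuma bounded-difference inequality gives, for each $\gamma>0$, $\mathrm{Pr}[F\ge\mathrm{E}[F]+\gamma]\le\exp(-\gamma^2\beta^2 c/(8S^2))$; setting the right side to $\delta$ produces $\gamma=(S/\beta)\sqrt{8\log(1/\delta)/c}$. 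Finally I bound $\mathrm{E}[F]\le\sqrt{\mathrm{E}[F^2]}\le\hat{\varphi}\|M\|_F\|N\|_F/\sqrt{\beta c}$ by Jensen's inequality together with \eqref{4.3}, and $\gamma\le\sqrt{(8/\beta)\log(1/\delta)}\,\|M\|_F\|N\|_F/\sqrt{\beta c}$ via $S\le\|M\|_F\|N\|_F$; adding the two and squaring gives \eqref{4.4} with $\eta=\hat{\varphi}+\sqrt{(8/\beta)\log(1/\delta)}$.

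The routine part is the variance bound. The main obstacle is the concentration step: one must verify that $F$ genuinely satisfies the bounded-difference property with a constant uniform over blocks (which hinges on the cancellation $S_k/\hat{c}_k=S/c$), and confirm that the factor of $8$ in $\eta$ arises from the $\exp(-\gamma^2/(2\sum_j\Delta_j^2))$ form of the martingale inequality rather than the sharper McDiarmid constant, so that the bound matches \cite[Theorem 1]{Drineas2006}.
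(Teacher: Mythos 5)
Your proof is correct and takes essentially the same route the paper intends, namely the Appendix~\ref{app.A} argument for Theorem~\ref{thm3.3} adapted to $\hat{c}_k$: the variance bound via the nearly optimal probability hypothesis plus $1-v_k^2\leq\theta_2$, followed by the Hoeffding--Azuma bounded-difference step with the Jensen bound on $\mathrm{E}\|MN-\hat{C}\hat{D}\|_F$. Your observation that $\hat{c}_k\propto\sum_{i=1}^{n_k}\|M^{k(i)}\|_2\|N_{k(i)}\|_2$ makes the per-sample perturbation bound uniform across blocks, eliminating the $(\theta_2/\theta_1)^{1/2}$ factor, is precisely why $\eta$ here is cleaner than in Theorem~\ref{thm3.3}.
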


\begin{proof}
	The proof can be completed along the line of  the proof of Theorem \ref{thm3.3}.
\end{proof}

\begin{remark}\label{rem4.4}
	Let $\theta_2=\theta_1<1$ and $\beta=1$  in Theorem \ref{thm4.2} we have $\eta=(\theta_2)^{1/2}+\sqrt{(8/\beta)log(1/\delta)}$. In this case,  the corresponding probability error bound is the same as the one in Theorem \ref{thm3.3}.	
\end{remark}

\subsection{ Modification with the BasicMatrixMultiplication Algorithm}\label{sec.3.2}

We can  use $C^{0k}D_{0k}$ constructed by Algorithm \ref{BMMA} with  the same sampling size $[c0/K]$ and a set of sampling probabilities  ${\{p_{0k_i}\}}^{n_k}_{i=1}$ to approximate $M^kN_k$, where  $c0$ denotes  the total sample size,
and  ${p_{0k_i}}$  with $i=1,\cdots,n_k$ are allowed to be uniform probabilities or nonuniform probabilities. Considering that   $(\sum_{i = 1}^{n_k}\|M^{k(i)}\|_2\|N_{k(i)}\|_2)^2-\|C^{0k}D_{0k}\|^2_F\geq0$ may not hold,  we propose $\widetilde{c}_k$ as follows
\begin{align}\label{4.7}
	\widetilde{c}_k=c\frac{(|(\sum_{i = 1}^{n_k}\|M^{k(i)}\|_2\|N_{k(i)}\|_2)^2-\|C^{0k}D_{0k}\|^2_F|)^{\frac{1}{2}}}{\sum_{k = 1}^{K}(|(\sum_{i = 1}^{n_k}\|M^{k(i)}\|_2\|N_{k(i)}\|_2)^2-\|C^{0k}D_{0k}\|^2_F|)^{\frac{1}{2}}}.
\end{align}
Based on the above idea, we  devise a two step algorithm summarized in Algorithm \ref{TSTABMM}.

\begin{algorithm}[htbp]
	\caption{Two Step Algorithm for Block Matrix Multiplication  }\label{TSTABMM}\small
	$\textbf{Input:}$ $M \in \mathbb{R}^{m\times n} $ and $N \in \mathbb{R}^{n\times p}$ set as in Section \ref{sec.1},  $\{n_{k}\}^{K}_{k=1}$such that $\sum_{k = 1}^{K}n_k=n$, $c\in \mathbb{Z}^+$, $c_0 \in \mathbb{Z}^+$,  and $\{p_{0k_i}\}^{n_k}_{i=1}$ with $p_{0k_i}\geq0$  such that $\sum_{i = 1}^{n_k}p_{0k_i}=1$, $k=1,\cdots,K$.  \\
	$\textbf{Output:}$ $\widetilde{C} \in \mathbb{R}^{m\times c}$, $\widetilde{D}\in \mathbb{R}^{c\times p}$, and	$\widetilde{C}\widetilde{D}$.\\		
	$\textbf{Step1:}$
	\begin{enumerate}
		\item for   $k\in{1,\cdots,K}$ do \\
		%\begin{enumerate}
		\quad  update $[C^{0k},D_{0k}]=$BasicMatrixMultiplication$(M^k,N_k,[c0/K],\{p_{0k_i}\}^{n_k}_{i=1})$\\
		\quad update $p_{k_i}=\frac{\|M^{k(i)}\|_2\|N_{k(i)}\|_2}{\sum_{i = 1}^{n_k}\|M^{k(i)}\|_2\|N_{k(i)}\|_2}, i=1,\cdots,n_k$\\
		%\end{enumerate}
		\item end
		\item  replace $\|M^kN_k\|^2_F$ in \eqref{3.9} by $C^{0k}D_{0k}$, i.e., 
		$\widetilde{c}_k=c\frac{(|(\sum_{i = 1}^{n_k}\|M^{k(i)}\|_2\|N_{k(i)}\|_2)^2-\|C^{0k}D_{0k}\|^2_F|)^{\frac{1}{2}}}{\sum_{k = 1}^{K}(|(\sum_{i = 1}^{n_k}\|M^{k(i)}\|_2\|N_{k(i)}\|_2)^2-\|C^{0k}D_{0k}\|^2_F|)^{\frac{1}{2}}}$\\
		\item return
		$\widetilde{c}_k$ for  $k=1,\cdots,K$,  and $p_{k_i}$ for $k=1,\cdots,K$ and $i=1,\cdots,n_k$
	\end{enumerate}
	$\textbf{Step2:}$
	\begin{enumerate}
		\item for   $k\in{1,\cdots,K}$ do \\
		\qquad
		$[\widetilde{C}^k,\widetilde{D}_k]=$BasicMatrixMultiplication$(M^k,N_k,\widetilde{c_k},\{p_{k_i}\}^{n_k}_{i=1})$
		\item end
		
		\item $\widetilde{C}=\begin{bmatrix}
			\widetilde{C}^1 & \widetilde{C}^2 & \cdots & \widetilde{C}^K
		\end{bmatrix}$, $\widetilde{D}^T=\begin{bmatrix}
			\widetilde{D}^T_1 & \widetilde{D}^T_2 & \cdots & \widetilde{D}^T_K
		\end{bmatrix}$
		\item
		$\widetilde{C}\widetilde{D}=\sum_{k = 1}^{K}\widetilde{C}^k\widetilde{D}_k $
		\item return
		$\widetilde{C}$, $\widetilde{D}$, and	$\widetilde{C}\widetilde{D}$		
	\end{enumerate}
\end{algorithm}

Now, we provide the asymptotic distribution  of the estimation errors of matrix elements and probability error bound of $\widetilde{C}\widetilde{D}$ obtained by Algorithm \ref{TSTABMM}.

\begin{theorem}\label{thm4.3}
	Suppose  $\hat{v}_k\sum_{i=1}^{n_k}\|M^{k(i)}\|_2\|N_{k(i)}\|_2=\|C^{0k}D_{0k}\|_F$, for  $\hat{\theta}_1\leq|1-\hat{v}^2_k|\leq\hat{\theta}_2$ with $k=1,\cdots,K$, 
	set $\mu_1$, $\mu_2$ and $L$ as in Theorem \ref{thm4.1}, and let $c_s=\mathop{\min }\limits_{k=1,\cdots,K }\widetilde{c_k}$.
	If   Conditions  \ref{C.3} and \ref{C.4} hold, then the matrices $\widetilde{C}$ and $\widetilde{D}$ constructed by  
	Algorithm \ref{TSTABMM}, satisfy
	\begin{align*}
		\frac{(\widetilde{C}\widetilde{D})_{(h,f)}-(MN)_{(h,f)}}{\sigma}\xrightarrow{L}N(0,1), for \ h=1,\cdots,m \  and \  f=1,\cdots,p,
	\end{align*} 
	where  $\xrightarrow{L}$ denotes  the  convergence in  distribution, and
	\begin{align*}
		\sigma^2&=\sum_{k = 1}^{K}|1-\hat{v}^2_k|^\frac{1}{2}\sum_{i = 1}^{n_k}\|M^{k(i)}\|_2\|N_{k(i)}\|_2[\sum_{k=1}^{K}\frac{1}{c|1-\hat{v}^2_k|^\frac{1}{2}}(\sum_{i=1}^{n_k}\frac{(M^k_{(h,i)})^2(N_{k(i,f)})^2}{\|M^{k(i)}\|_2\|N_{k(i)}\|_2}-\frac{((M^kN_k)_{(h,f)})^2}{\sum_{i = 1}^{n_k}\|M^{k(i)}\|_2\|N_{k(i)}\|_2})]\\
		&	=O_p((\sqrt{c_s})^{-1}).
	\end{align*}	
\end{theorem}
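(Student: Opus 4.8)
The plan is to reduce the analysis of Step~2 of Algorithm~\ref{TSTABMM} to the setting already handled in Theorem~\ref{thm3.1}. Conditional on the output of Step~1, the block sizes $\widetilde{c}_k$ and the probabilities $p_{k_i}$ are fixed, so Step~2 is precisely an instance of Algorithm~\ref{SABMM } run with $c_k=\widetilde{c}_k$ and the probabilities \eqref{3.8}. Consequently Lemma~\ref{lem3.1} applies conditionally and gives $\mathrm{E}[(\widetilde{C}\widetilde{D})_{(h,f)}]=(MN)_{(h,f)}$ together with the variance formula \eqref{3.7} with $c_k$ replaced by $\widetilde{c}_k$. Writing $S_k=\sum_{i=1}^{n_k}\|M^{k(i)}\|_2\|N_{k(i)}\|_2$ and using the reparametrization $\|C^{0k}D_{0k}\|_F=\hat v_k S_k$, definition \eqref{4.7} becomes $\widetilde{c}_k=c\,S_k|1-\hat v_k^2|^{1/2}/T$ with $T=\sum_{k}S_k|1-\hat v_k^2|^{1/2}$. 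Substituting $1/p_{k_i}=S_k/(\|M^{k(i)}\|_2\|N_{k(i)}\|_2)$ and this form of $\widetilde{c}_k$ into \eqref{3.7} and collecting terms produces exactly the stated expression for $\sigma^2$; this step is routine algebra of the same type as in the proof of Theorem~\ref{thm3.2}.

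With mean and variance in hand, I would verify the two analytic ingredients of the Lindeberg--Feller theorem in the present setting. For the Lindeberg condition I would reuse the chain of inequalities ending in \eqref{thm2.1.4}, which dominates the truncated second-moment sum by $\sum_{k}\frac{8}{\zeta\widetilde{c}_k^{3}}\bigl(\sum_{i}|M^k_{(h,i)}||N_{k(i,f)}|/p_{k_i}\bigr)^3$. The boundedness hypothesis $\mu_1 L\le|M_{(h,f)}|,|N_{(h,f)}|\le\mu_2 L$ is the key device here: it yields $\sqrt m\,\mu_1 L\le\|M^{k(i)}\|_2\le\sqrt m\,\mu_2 L$ and the analogous bounds for $\|N_{k(i)}\|_2$, which let me replace the element-wise factors by constant multiples of the norm products $\|M^{k(i)}\|_2\|N_{k(i)}\|_2$. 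After this substitution the whole sum is bounded by a constant multiple of $c^{-2}\bigl(\sum_{k}\sum_i\|M^{k(i)}\|_2\|N_{k(i)}\|_2\bigr)^3$, which is $o_p(1)$ by Condition~\ref{C.3}.

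For the order of $\sigma^2$ I would follow \eqref{thm2.1.5}: discard the subtracted nonnegative term, factor out $(\sqrt{c_s})^{-1}$ using $c_s=\min_k\widetilde{c}_k$, and once more invoke the boundedness hypothesis to convert the element-wise numerators into norm products, reducing the remaining factor to a constant multiple of $c^{-1/2}\bigl(\sum_{k}\sum_i\|M^{k(i)}\|_2\|N_{k(i)}\|_2\bigr)^2$, which is $O_p(1)$ by Condition~\ref{C.4}. This gives $\sigma^2=O_p((\sqrt{c_s})^{-1})$. Combining the zero conditional mean, the variance identity, and these two facts, the conditional Lindeberg--Feller central limit theorem \cite[Proposition 2.27]{Van1998} yields the claimed asymptotic normality, exactly as at the close of the proof of Theorem~\ref{thm3.1}.

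The genuinely new difficulty, and the part I would treat most carefully, is that $\widetilde{c}_k$ is itself random: it is a function of the Step~1 estimates $C^{0k}D_{0k}$ through $\hat v_k$. All the bounds above must therefore be understood conditionally on Step~1, and one must check that the $o_p(1)$ and $O_p(\cdot)$ statements remain valid with the data-dependent $\widetilde{c}_k$ sitting in the denominators. The assumption $\hat\theta_1\le|1-\hat v_k^2|\le\hat\theta_2$ is exactly what controls this: it keeps each $\widetilde{c}_k$ uniformly bounded away from degeneracy, so that $1/\widetilde{c}_k$ is dominated by a constant multiple of $T/(cS_k)$ and $c_s$ grows proportionally to $c$, allowing the conditional central limit theorem to be applied and then lifted to the unconditional convergence in distribution asserted in the theorem.
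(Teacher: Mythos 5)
Your proposal follows exactly the route the paper intends: the paper's own proof of Theorem~\ref{thm4.3} is a single sentence deferring to the proof of Theorem~\ref{thm3.1}, and your argument --- conditioning on Step~1 so that Step~2 becomes an instance of the block sampling algorithm, applying Lemma~\ref{lem3.1}, substituting the optimal probabilities and $\widetilde{c}_k$ to recover the stated $\sigma^2$, and verifying the Lindeberg condition and the order $O_p((\sqrt{c_s})^{-1})$ via the boundedness hypothesis together with Conditions~\ref{C.3} and \ref{C.4} --- is precisely that argument written out in detail. Your explicit treatment of the randomness of $\widetilde{c}_k$ (conditional CLT lifted to unconditional convergence in distribution, with $\hat{\theta}_1\leq|1-\hat{v}^2_k|\leq\hat{\theta}_2$ keeping the block sizes nondegenerate) is more careful than anything the paper records, but it is the same approach rather than a different one.
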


\begin{proof}
	The proof can be completed along the line of  the proof of Theorem \ref{thm3.1}.
\end{proof}	

\begin{theorem}\label{thm4.4}
	Assume the same setting as in Theorem \ref{thm4.3}  and let  $\widetilde{\varphi}=\frac{(\hat{\theta}_2-\hat{\theta}_1\beta+\theta_2\hat{\theta}_1\beta)^\frac{1}{2}}{(\hat{\theta}_2\hat{\theta}_1)^1/4}$ with $\theta_2$ given as in Theorem \ref{thm3.3}. Then, for Algorithm \ref{TSTABMM} applied with $p_{k_i}\geq\frac{\beta\|M^{k(i)}\|_2\|N_{k(i)}\|_2}{\sum_{i = 1}^{n_k}\|M^{k(i)}\|_2\|N_{k(i)}\|_2}$  for $\beta\leq1$, the sum of the asymptotic variances satisfies
	\begin{eqnarray}\label{4.8}
		\sum_{h = 1}^{m}\sum_{f = 1}^{p}\sigma^2\leq\frac{\widetilde{\varphi}^2}{\beta c}\|M\|^2_F\|N\|^2_F,
	\end{eqnarray}
	and by setting $\delta\in(0,1)$ and $\eta=\widetilde{\varphi}+(\frac{\hat{\theta}_2}{\hat{\theta}_1})^\frac{1}{2}\sqrt{(8/\beta)log(1/\delta)}$, then
	\begin{eqnarray}\label{4.9}
		{\|MN-\widetilde{C}\widetilde{D}\|^2_F }\leq{ \frac{\eta^2}{\beta c}\|M\|^2_F\|N\|^2_F}
	\end{eqnarray}
	holds with the probability at least $1-\delta$.
\end{theorem}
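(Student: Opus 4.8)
The plan is to follow the argument of Theorem \ref{thm3.3} (Appendix \ref{app.A}), replacing the true block norms by their first–step surrogates. Write $S_k=\sum_{i=1}^{n_k}\|M^{k(i)}\|_2\|N_{k(i)}\|_2$. By Theorem \ref{thm4.3} the sum of asymptotic variances of $\widetilde{C}\widetilde{D}$ has exactly the form in \eqref{3.7} with $c_k=\widetilde{c}_k$, namely
\begin{align*}
	\sum_{h=1}^{m}\sum_{f=1}^{p}\sigma^2=\sum_{k=1}^{K}\frac{1}{\widetilde{c}_k}\Big(\sum_{i=1}^{n_k}\frac{\|M^{k(i)}\|_2^2\|N_{k(i)}\|_2^2}{p_{k_i}}-\|M^kN_k\|_F^2\Big).
\end{align*}
First I would insert the nearly optimal probabilities $p_{k_i}\ge\beta\|M^{k(i)}\|_2\|N_{k(i)}\|_2/S_k$, which gives $\sum_{i}\|M^{k(i)}\|_2^2\|N_{k(i)}\|_2^2/p_{k_i}\le S_k^2/\beta$, while keeping the variance–reducing term $-\|M^kN_k\|_F^2/\widetilde{c}_k$ intact.

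Next I would substitute the two–step block sizes. Using $\|C^{0k}D_{0k}\|_F=\hat{v}_kS_k$, formula \eqref{4.7} reads $\widetilde{c}_k=cS_k|1-\hat{v}_k^2|^{1/2}/\big(\sum_{j=1}^{K}S_j|1-\hat{v}_j^2|^{1/2}\big)$, so $1/\widetilde{c}_k$ turns the bound into a product of the two sums $\sum_j S_j|1-\hat{v}_j^2|^{1/2}$ and $\sum_k S_k(\tfrac{1}{\beta}-v_k^2)/|1-\hat{v}_k^2|^{1/2}$, where $\|M^kN_k\|_F=v_kS_k$. The essential point -- and the feature that distinguishes this proof from that of Theorem \ref{thm3.3} -- is that the block sizes are calibrated by the surrogate $\hat{v}_k$ while the variance still carries the true $v_k$. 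I would therefore split $\tfrac{1}{\beta}-v_k^2=\tfrac{1}{\beta}(1-\beta+\beta(1-v_k^2))$ and control the resulting cross–sums with the bounds $\hat{\theta}_1\le|1-\hat{v}_k^2|\le\hat{\theta}_2$ (for the calibration factors) and $\theta_1\le1-v_k^2\le\theta_2$ (for the variance factor); combined with $(\sum_kS_k)^2\le\|M\|_F^2\|N\|_F^2$ by the Cauchy--Schwarz inequality over all columns, this collects the constants into $\widetilde{\varphi}^2=(\hat{\theta}_2-\hat{\theta}_1\beta+\theta_2\hat{\theta}_1\beta)/(\hat{\theta}_2\hat{\theta}_1)^{1/2}$ and yields \eqref{4.8}.

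Finally, for the high–probability bound \eqref{4.9} I would use Remark \ref{rem2.2}, namely $\mathrm{E}[\|MN-\widetilde{C}\widetilde{D}\|_F^2]=\sum_{h,f}\sigma^2$, so that \eqref{4.8} already controls the expectation. Then, exactly as in \cite[Theorem 1]{Drineas2006}, I would regard $\|MN-\widetilde{C}\widetilde{D}\|_F$ as a function of the $c$ independently sampled outer products and apply a bounded–differences (McDiarmid) inequality: changing one sampled index in block $k$ perturbs the estimate by an amount controlled through $\widetilde{c}_k$ and $p_{k_i}$, so the bounded–difference increments sum to the deviation term $(\hat{\theta}_2/\hat{\theta}_1)^{1/2}\sqrt{(8/\beta)\log(1/\delta)}$, the ratio $\hat{\theta}_2/\hat{\theta}_1$ reflecting the imbalance of the block sizes. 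Adding this to $\widetilde{\varphi}$ produces $\eta$ and hence \eqref{4.9}.

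The main obstacle I anticipate is the bookkeeping in the middle step: because the normalizing sum carries $|1-\hat{v}_k^2|$ -- with an absolute value, since the first–step estimate $\|C^{0k}D_{0k}\|_F$ may overshoot $S_k$ -- whereas the variance carries $1-v_k^2$, the two sums no longer share a common factor as they do in Theorem \ref{thm3.3}, and the cross–terms must be bounded just tightly enough, using $\hat{\theta}_1,\hat{\theta}_2$ for the calibration factors and $\theta_2$ for the variance factor, to reproduce the exact constant $\widetilde{\varphi}$ rather than a looser one. Checking that this absolute value introduces no sign problems in the estimate, and that it is compatible with the hypothesis $\hat{\theta}_1\le|1-\hat{v}_k^2|\le\hat{\theta}_2$, is the delicate part of the argument.
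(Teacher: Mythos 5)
Your proposal is correct and follows essentially the same route as the paper: the paper proves Theorem \ref{thm4.4} by rerunning the Appendix \ref{app.A} argument for Theorem \ref{thm3.3} with $\widetilde{c}_k$ from \eqref{4.7} in place of $c_k$, which is exactly your plan --- the variance identity, insertion of the nearly optimal probabilities, substitution of the surrogate block sizes, the $\hat{\theta}_1,\hat{\theta}_2,\theta_2$ bounds, Cauchy--Schwarz over all columns, and then expectation control plus Hoeffding--Azuma with bounded differences of size $\frac{2}{\beta c}(\hat{\theta}_2/\hat{\theta}_1)^{1/2}\|M\|_F\|N\|_F$ for the tail bound \eqref{4.9}.

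One bookkeeping caution on the step you yourself flag as delicate: merging the two terms of the variance into the single coefficient $\tfrac{1}{\beta}-v_k^2$ before bounding, as you propose, does \emph{not} land exactly on $\widetilde{\varphi}^2$. Bounding that merged coefficient by $\tfrac{1}{\beta}-(1-\theta_2)$ and both calibration factors by the ratio $(\hat{\theta}_2/\hat{\theta}_1)^{1/2}$ yields the numerator $(1-\beta+\beta\theta_2)\hat{\theta}_2=\hat{\theta}_2-\beta\hat{\theta}_2(1-\theta_2)$ over $(\hat{\theta}_2\hat{\theta}_1)^{1/2}$, whereas the paper's constant has numerator $\hat{\theta}_2-\beta\hat{\theta}_1(1-\theta_2)$; the latter arises only if one keeps the subtracted term $\sum_k\|M^kN_k\|_F^2/\widetilde{c}_k$ separate and lower-bounds it with the \emph{opposite} ratio $(\hat{\theta}_1/\hat{\theta}_2)^{1/2}$, as is done in Appendix \ref{app.A}. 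Since $\hat{\theta}_2\geq\hat{\theta}_1$ and $\theta_2\leq1$, your constant is in fact the smaller of the two, so \eqref{4.8}, and with it \eqref{4.9}, follows a fortiori --- the proof stands, and indeed proves a marginally sharper bound --- but the assertion that your split reproduces the exact constant $\widetilde{\varphi}$ is not literally what that split gives.
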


\begin{proof}
	The proof can be completed  along the line of  the proof of the proof of Theorem \ref{thm3.3}.
\end{proof}

\begin{remark}\label{rem4.6}
	When  $\hat{\theta}_2>1$, $\hat{\theta}_1\leq1$ and $1-\theta_2=o_p(1)$,  the bounds in Theorem \ref{thm4.4} is a little weaker than the one in  Theorem \ref{thm4.2}.	
\end{remark}

\section{Numerical Experiments}
\label{sec.4}

Without loss of generality, we set the sizes of the blocks of the involved block matrices $M$ and $N$ to be the same, namely $n_k=n/K$ for $k=1,\cdots,K$. To construct the following matrices $M$ and $N$, we let $m=26$, $p=28$, $n=5\times10^5$, $\Sigma_1=(1 \times 0.7^{|i-j|})$ with  $1\leq i,j \leq m$ and $\Sigma_2=(2 \times 0.7^{|i-j|})$  with  $1\leq i,j \leq p$.

Case I:  The  $i$-th column of $M$ with $1\leq i \leq m$, $M^{(i)}$,  is generated from a multivariate normal distribution, that is, $M^{(i)} \sim N(\rm{0}, \Sigma_1)$. Similarly, set $N_{(j)} \sim N(\rm{0}, \Sigma_2)$.

Case II: The  $i$-th column of $M$ with $1\leq i \leq m$, $M^{(i)}$, is generated  from a multivariate $t$ distribution with 1 degree of freedom, that is, $M^{(i)} \sim t_1(\rm{1}, \Sigma_1)$. Similarly, set $N_{(j)} \sim t_1(\rm{1}, \Sigma_2)$.

For the above matrices, by setting suitable values of $K$, $c0$ and $c$,  we do  five specific experiments summarized  in Table \ref{Tab51} and report the numerical results on accuracy and CPU time  in Figures 1-10.  Note that all the numerical results are based on 100 replications, and in these figures, SSM denotes the method from \cite{Charalambides2020}, whose sampling probabilities are  given in \eqref{1.4}, and
other notations for the methods are introduced in Table \ref{Tab52}. 

\begin{table}[H]
	\scriptsize		
	\centering
	% table caption is above the table

	%	\floatsetup[table]{capposition=bottom}
	% Give a unique label
	
	% For LaTeX tables use
	\begin{threeparttable}
		
		\begin{tabular}{cccccccccc}
			
			\hline\noalign{\smallskip} 
			
			Number&	Comparison&  $c$ &  $K$  &  $c0$  & Results  \\
			
			\noalign{\smallskip}\hline\noalign{\smallskip}
			1&	  Algorithm \ref{SABMM } and SSM &  $50000$ to $5\times10^5$ & $10$ &$5000$	&  Figures 1-2			
			\\		
			2& Algorithm \ref{SABMM } and SSM & $50000$  & $10$ to $500$  &$5000$	 &  Figures 3-4	
			\\	
			3&	Algorithms \ref{SABMM } and  \ref{TSTABMM} & $50000$ to $5\times10^5$ & $10$ &$5000$	&  Figures 5-6
			\\	
			4&	Algorithms \ref{SABMM } and   \ref{TSTABMM} &  $50000$  & $10$ to $500$ &$5000$ &  Figures 7-8
			\\
			5&	Algorithms \ref{SABMM } and   \ref{TSTABMM} &  $50000$ & $10$ &$500$ to $50000$&  Figures 9-10
			\\	
			\noalign{\smallskip}\hline	
			
		\end{tabular}
		
		%	\begin{tablenotes}
		%\footnotesize	
		%	\itemDrineas2006 $p_{0k_i}$ denotes the probability used to  construct the sampling matrix $R_k$ in Algorithm \ref{TSTABMM}.
		%	\end{tablenotes}
	\end{threeparttable}

	\caption{Description of five experiments}
	\label{Tab51}
	
\end{table}

\begin{table}[H]
	\scriptsize		
	\centering
	% table caption is above the table
	
%	\caption{Explanation  of sampling methods with different probabilities and block sizes}
%	\floatsetup[table]{capposition=bottom}
	%\floatsetup[figure]{capposition=top}
	
	% Give a unique label
	
	% For LaTeX tables use
	\begin{threeparttable}

		\begin{tabular}{cccccccccc}

			\hline\noalign{\smallskip} 
			
			Method &$p_{k_i}$& $c_k$ &    $p_{0k_i}$ \\
			
			\noalign{\smallskip}\hline\noalign{\smallskip}
			ONU (from Algorithm \ref{TSTABMM})& 	 \eqref{3.8}	& 	 	$\widetilde{c_k}$ & 	 	$\frac{1}{n_k}$ 			
			\\		
			ONMCNR (from Algorithm \ref{TSTABMM})& 		 \eqref{3.8}& 		$\widetilde{c_k}$ & 		$\frac{\|M^{k(i)}\|_2\|N_{k(i)}\|_2}{\sum_{i = 1}^{n_k}\|M^{k(i)}\|_2\|N_{k(i)}\|_2}$ 	
			\\	
			OPL (from Algorithm \ref{SABMM })  	&	 \eqref{3.8}& 	  	 \eqref{3.9} & 	 Null 	
			\\	
			ONC (from Algorithm \ref{SABMM })& 	  	 \eqref{3.8}& 	 	$\hat{c_k}$ 	 & 	Null
			\\
			UU (from Algorithm \ref{SABMM })& 		$\frac{1}{n_k}$ 	 & 	$\frac{c}{K}$ & 	  Null		
			\\	
			\noalign{\smallskip}\hline	
			
		\end{tabular}

		%	\begin{tablenotes}
		%\footnotesize	
		%	\itemDrineas2006 $p_{0k_i}$ denotes the probability used to  construct the sampling matrix $R_k$ in Algorithm \ref{TSTABMM}.
		%	\end{tablenotes}
		
	\end{threeparttable}
\caption{Explanation  of sampling methods with different probabilities and block sizes}
	\label{Tab52}

\end{table}

%\begin{figure}[H]
%\centering
%	\includegraphics[width=14cm]{CaseIII_time_two_al_K.eps} %[图片大小]{图片路径}
%	\caption{The CPU times for two kinds of sampling methods in Case III, varying  with $K$} %图片标题
%	\label{fig54}
%	\end{figure}

In the first two experiments,   we compare Algorithm \ref{SABMM } and  SSM for  different $c$ and $K$, respectively. The corresponding numerical results  are shown in  Figures 1-4. From these figures, we can find that, for Case II, OPL and ONC outperform SSM in accuracy for different $c$ or different $K$, however, they need more computing time. While, the improvement in accuracy is more than the increasement in computing time.  For Case I, the four methods have similar performances in accuracy, and  OPL and ONC are a little expensive. These findings are consistent with the theoretical results of these methods. Furthermore, it is interesting to find that UU may be superior to SSM in accuracy and CPU time for Case II.

\begin{figure}[H]
	\centering
	\includegraphics[width=14cm]{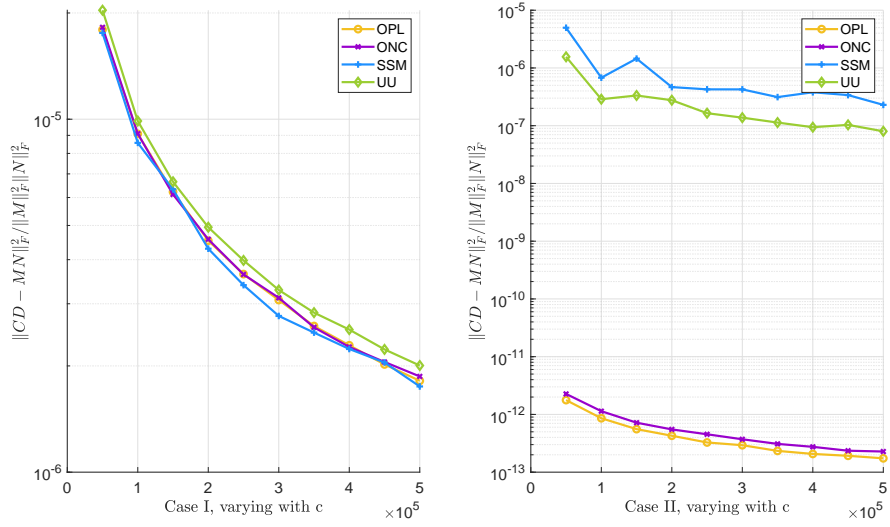} %[图片大小]{图片路径}
	\caption{Comparison of relative errors for Algorithm 2 and SSM varying  with $c$} %图片标题
	\label{fig51}
\end{figure}

\begin{figure}[H]
	\centering
	\includegraphics[width=14cm]{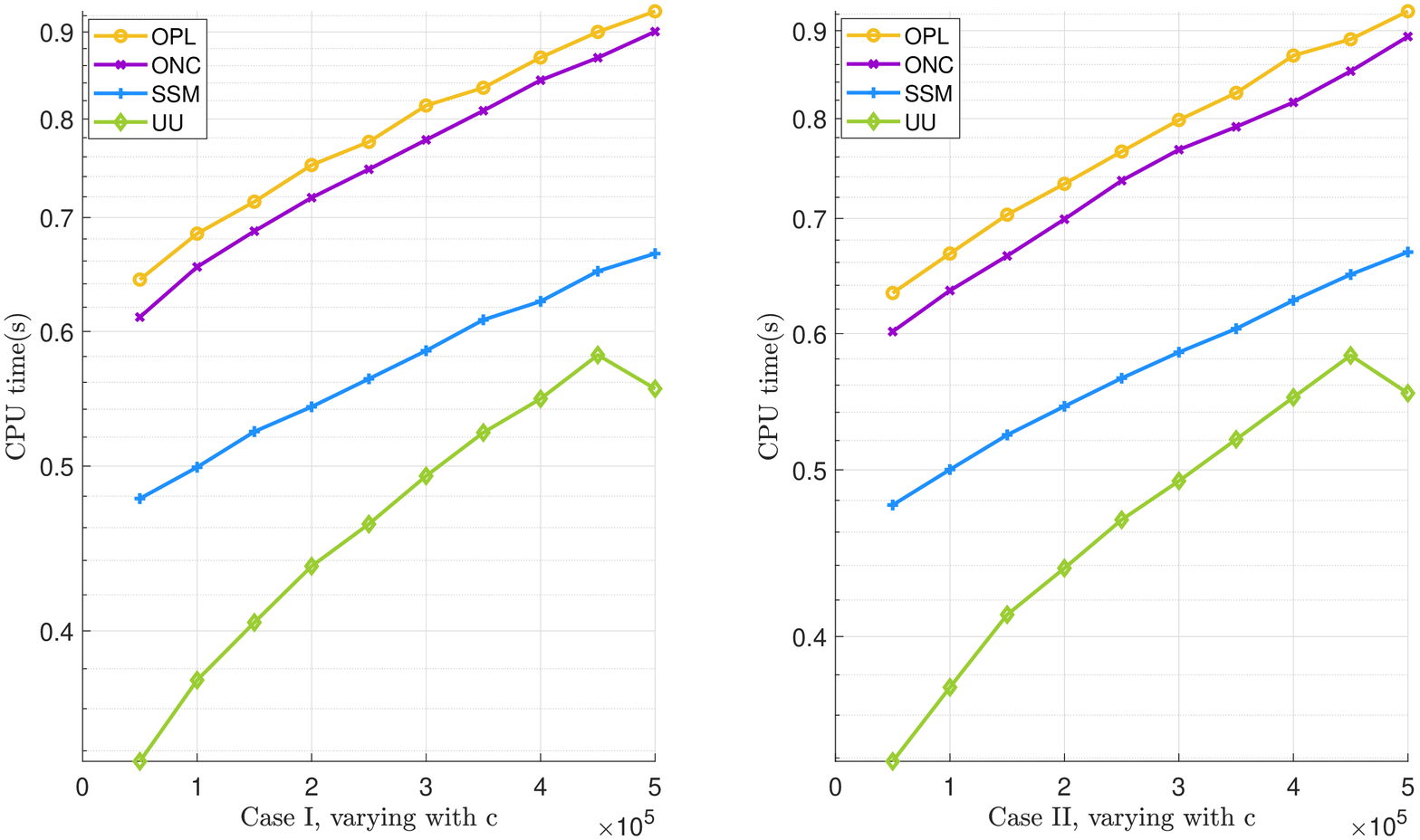} %[图片大小]{图片路径}
	\caption{Comparison of CPU time for Algorithm 2 and SSM varying  with $c$} %图片标题
	\label{fig52} 
\end{figure}

\begin{figure}[H]
	\centering
	\includegraphics[width=14cm]{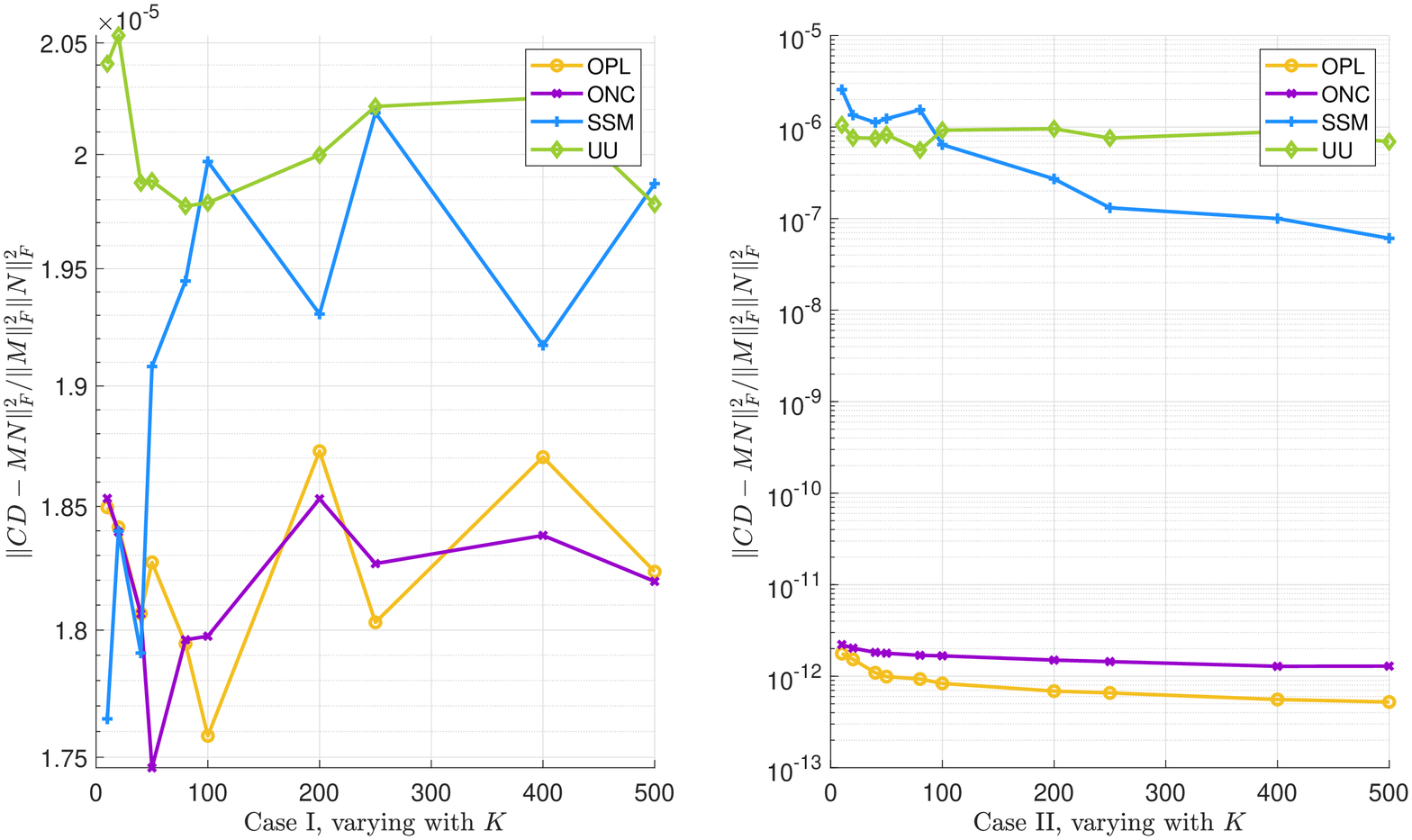} %[图片大小]{图片路径}
	\caption{Comparison of relative errors for Algorithm 2 and SSM varying  with $K$} %图片标题
	\label{fig53}
\end{figure}

\begin{figure}[H]
	\centering
	\includegraphics[width=14cm]{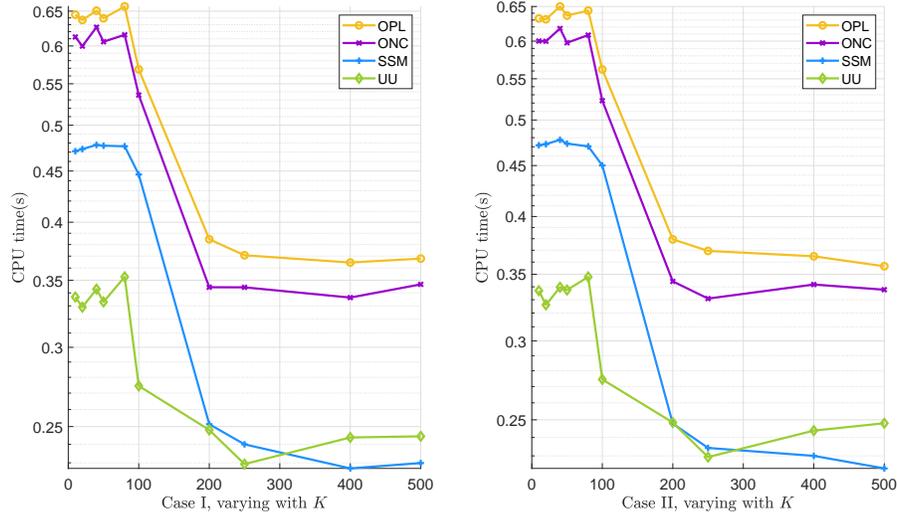} %[图片大小]{图片路径}
	\caption{Comparison of CPU time for Algorithm 2 and SSM varying  with $K$} %图片标题
	\label{fig54}
\end{figure}

The third and fourth experiments are utilized to compare Algorithms \ref{SABMM } and \ref{TSTABMM} for different $c$ and  different $K$, respectively.  Based on the numerical results  presented in Figures 5-8,  we get that, for Case II,  OPL always performs best in accuracy and needs most CPU time in most of cases. For different $c$, ONMCNR has the similar performance in accuracy to OPL, however, for large $K$, i.e., small $c0/K$, it has the worst accuracy. It is a little strange and we have no  reasonable explanation at present. In addition, ONC always performs quite well. It needs  least CPU time but has the similar accuracy to OPL.  For Case I, the four methods perform similarly in accuracy and CPU time.

\begin{figure}[H]
	\centering
	\includegraphics[width=14cm]{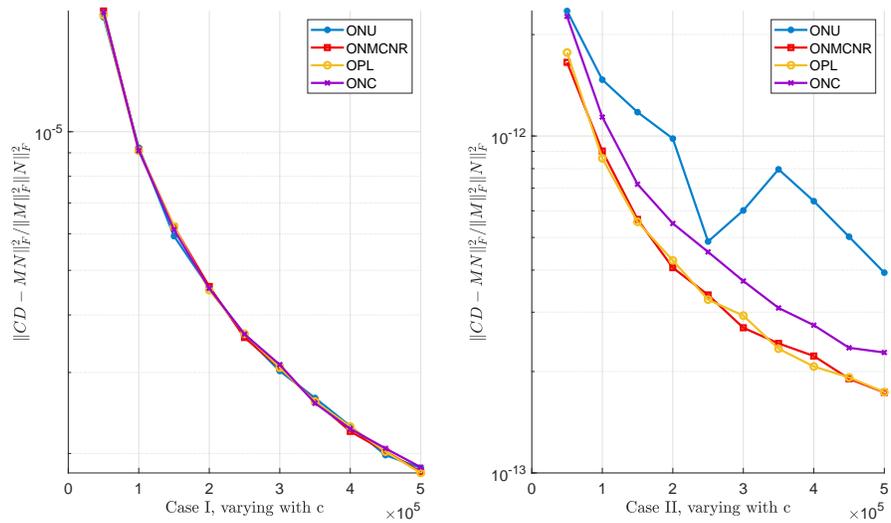} %[图片大小]{图片路径}
	\caption{Comparison of relative errors for Algorithms 2 and  3 varying  with $c$} %图片标题
	\label{fig55}
\end{figure}

%\begin{figure}[H]
%	\centering
%	\includegraphics[width=14cm]{1x10^6_error_mymethod_approximate_c.eps} %[图片大小]{图片路径}
%	\caption{The relative error for differnt sampling methods, $n=1\times10^6$} %图片标题
%	\label{fig511}
%\end{figure}

\begin{figure}[H]
	\centering
	\includegraphics[width=14cm]{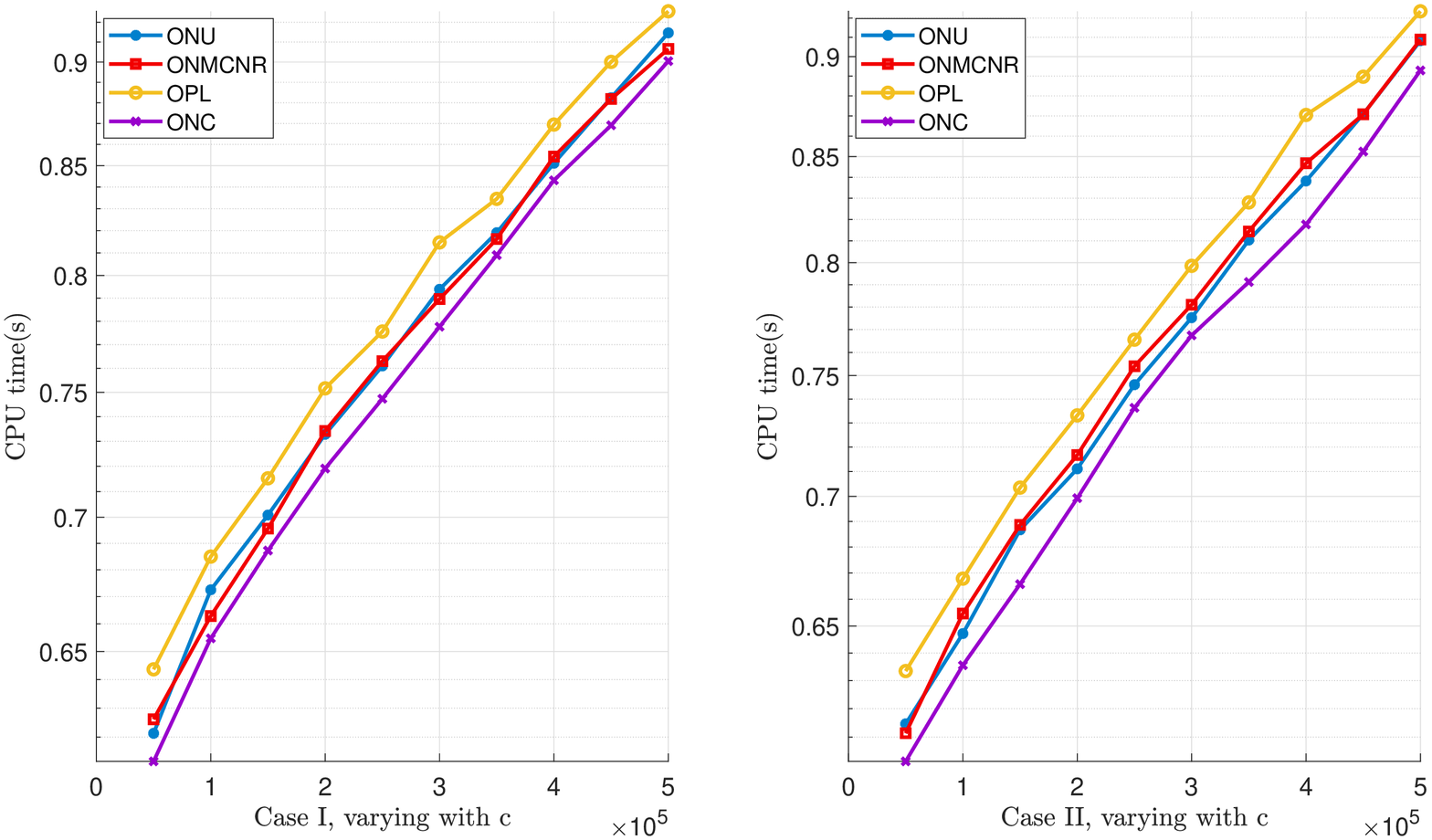} %[图片大小]{图片路径}
	\caption{Comparison of CPU time for Algorithms 2 and  3 varying  with $c$} %图片标题
	\label{fig56}
\end{figure}

\begin{figure}[H]
	\centering
	\includegraphics[width=14cm]{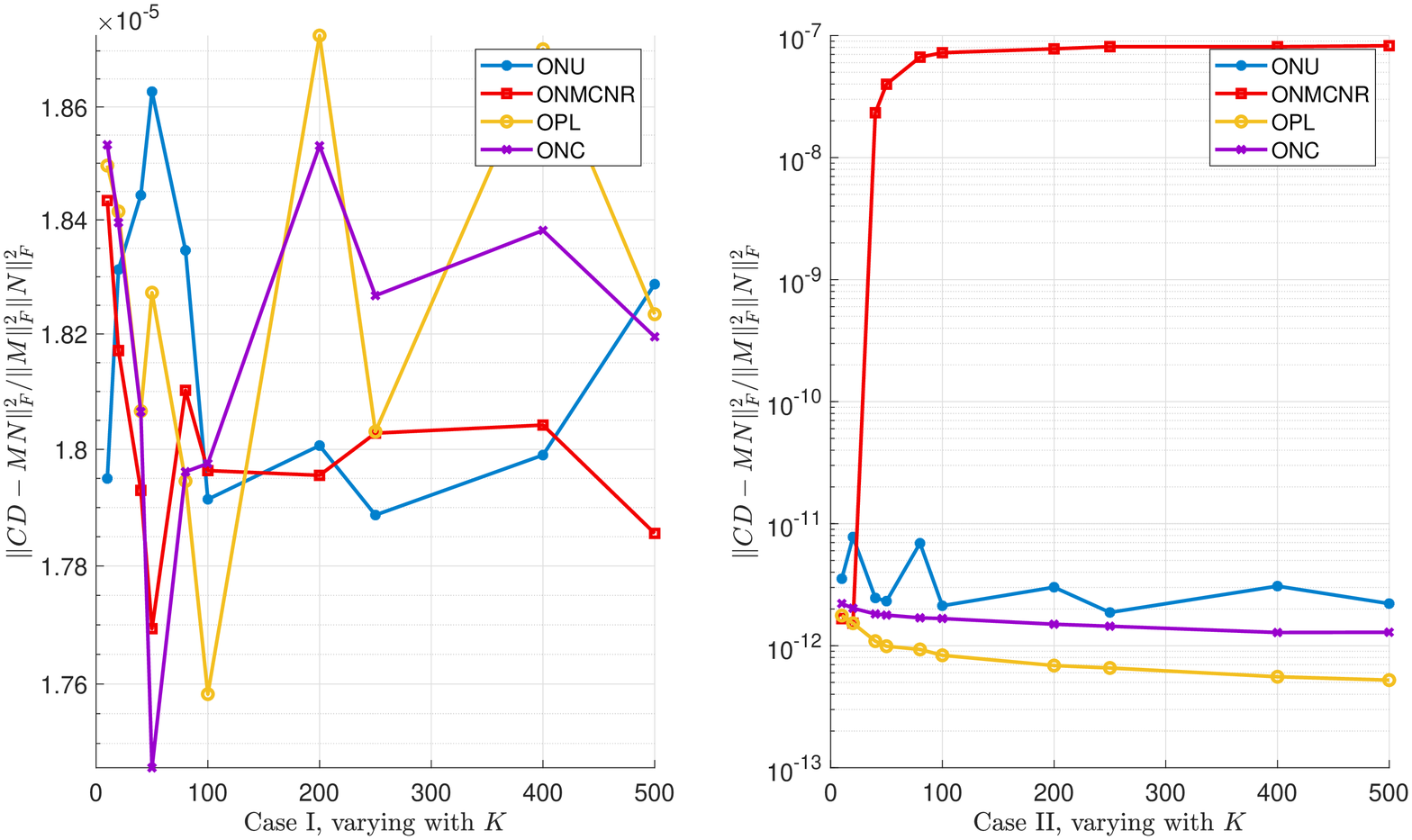} %[图片大小]{图片路径}
	\caption{Comparison of relative errors for Algorithms 2 and  3   varying  with $K$} %图片标题
	\label{fig57}
\end{figure}

\begin{figure}[H]
	\centering
	\includegraphics[width=14cm]{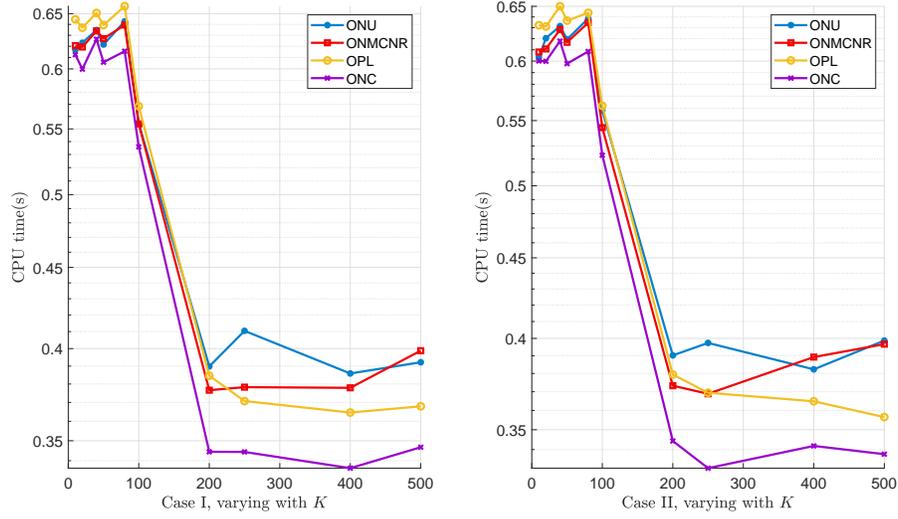} %[图片大小]{图片路径}
	\caption{Comparison of CPU time for Algorithms 2 and  3   varying  with $K$} %图片标题
	\label{fig58}
\end{figure}

In the  last experiment, we compare Algorithms \ref{SABMM } and \ref{TSTABMM} for different $c0$. The corresponding numerical results are shown in  Figures  9 and 10. From these figures, it is easy to see 
that, for Case II, OPL and ONMCNR have the same performances in accuracy for large $c0$, i.e., large $c0/K$, and similar performances in CPU time for very large $c0$. The reason for the latter is that, in this case, the computation complexities of $\|C^{0k}D_{0k}\|^2_F $ and $\|M^kN_k\|^2_F $ are similar. In addition, as before, ONC always performs quite well. For Case I, the four methods show the similar accuracy for different  $c0$.

\begin{figure}[H]
	\centering
	\includegraphics[width=14cm]{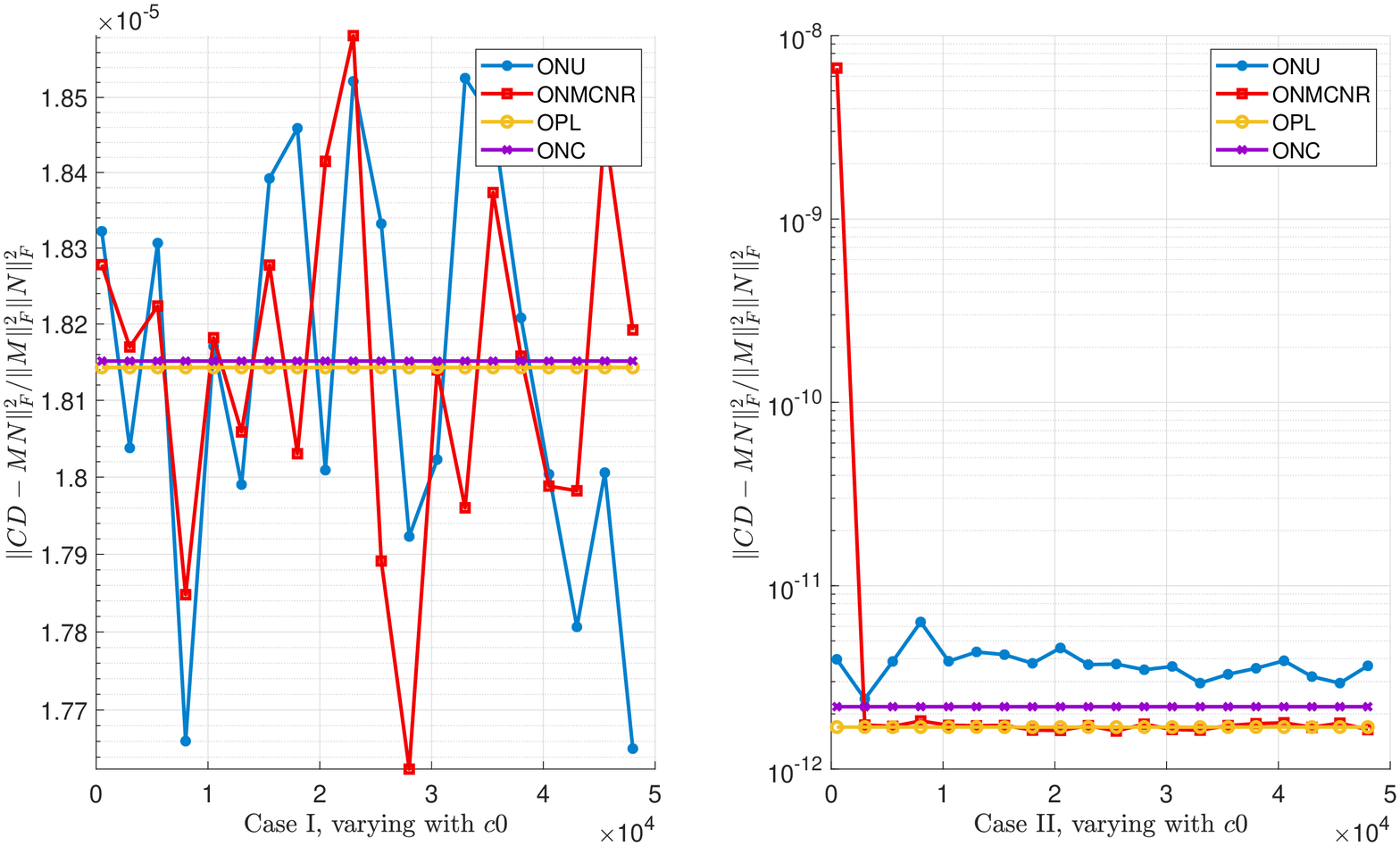} %[图片大小]{图片路径}
	\caption{Comparison of relative errors  for Algorithms 2 and  3  varying   with $c0$} %图片标题
	\label{fig59}
\end{figure}

\begin{figure}[H]
	\centering
	\includegraphics[width=14cm]{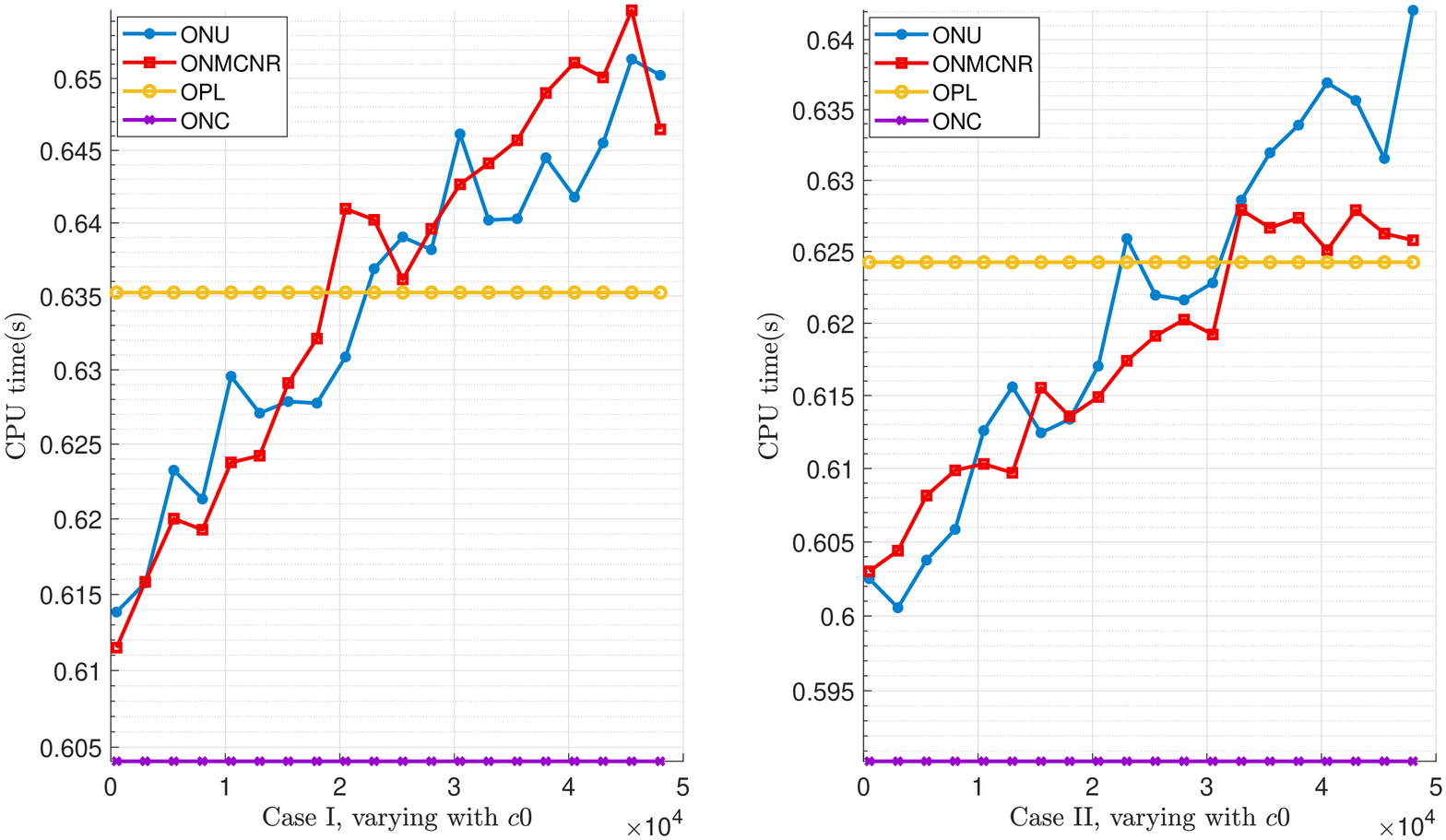} %[图片大小]{图片路径}
	\caption{Comparison of  CPU time for Algorithms 2 and  3  varying   with $c0$} %图片标题
	\label{fig510}
\end{figure}

In a word, for matrices whose rows or columns norms  are nonuniform, OPL performs best in accuracy in all cases and worst in CPU time in most cases. When $c0/K$ is large, ONMCNR and OPL have almost the same performances  in accuracy and CPU time. In addition, ONC always performs quite well.

\section{Concluding Remarks}
\label{sec.5}

We present the optimal sampling probabilities and sampling block sizes in the randomized sampling algorithm  for block matrix multiplication. Modified sampling block sizes and a two step algorithm for reducing the computation cost are also provided.   Numerical experiments show that our new methods outperform the SSM method in \cite{Charalambides2020} in accuracy with a little extra computation cost.

It is easy to see that the blocks of the matrices can be regarded as the single matrices scattered at multiple locations. So, the proposed methods are applicable to distributed data and  distributed computations and hence should have many potential important applications in the age of big data.

	\begin{appendices}
	
	\section{Proof of Theorem \ref{thm3.3}} \label{app.A}	
	
	We first  deduce that
	\begin{align*}
		\sum_{h = 1}^{m}\sum_{f = 1}^{p}\sigma^2
		&=\sum_{k=1}^{K}\sum_{i=1}^{n_k}\frac{\|M^{k(i)}\|^2_2\|N_{k(i)}\|^2_2}{c_kp_{k_i}}-\sum_{k=1}^{K}\frac{\|M^kN_k\|^2_F}{c_k}\\
		&\leq\frac{1}{\beta c }(\frac{\theta_2}{\theta_1})^\frac{1}{2}(\sum_{k=1}^{K}\sum_{i=1}^{n_k}\|M^{k(i)}\|_2\|N_{k(i)}\|_2)^2 -\frac{(1-\theta_2)}{c}(\frac{\theta_1}{\theta_2})^\frac{1}{2}(\sum_{k=1}^{K}\sum_{i=1}^{n_k}\|M^{k(i)}\|_2\|N_{k(i)}\|_2)^2\\
		&\leq\frac{\theta_2-\theta_1\beta+\theta_2\theta_1\beta}{\beta c(\theta_2\theta_1)^{1/2}}\|M\|^2_F\|N\|^2_F\\
		&=\frac{\varphi^2}{\beta c}\|M\|^2_F\|N\|^2_F,
	\end{align*}
	where the second inequality is derived from the Cauchy-Schwarz inequality. To prove \eqref{3.12}, we define a event $\theta$ as
	\begin{align*}
		{\|MN-{C}{D}\|_F }\leq{ \frac{\eta}{\sqrt{\beta} c}\|M\|_F\|N\|_F}.
	\end{align*}
	Thus,	as long as getting $\mathrm{Pr[\theta]}\geq1-\delta$,  \eqref{3.12} is proved. To explain easily, we define  a function
	\begin{align*}
		G(x)={\|MN-{C}{D}\|}^2_F
	\end{align*}
	with random variable $x=(1(i_1),\cdots,1(i_{c_1}),2(i_1),\cdots,2(i_{c_2}),\cdots,K(i_1),\cdots,K(i_{{c}_K}))$ standing for the positions of sampled results, where $k(i_t)$ denotes the  picked  $t$-th column (row) from the $k$-th block of $M$ ($N$), for  $k=1,\cdots,K$ and  $t=1,\cdots,c_k$.
	It is will be  shown that changing  one coordinate  $k(i_t)$ at a time  does not change  the value of $G$ too much.   Considering   $x$ and $x'$ differing only in the $k(i_t)$-th coordinate,  we can construct corresponding ${\|MN-{C}{D}\|}^2_F$ and ${\|MN-{C}^{'}{D}^{'}\|}^2_F$, respectively. Note that ${C}^{'}$ (${D}^{'}$)  differs from  $C$ ($D$) in only a single column(row). So, 
	\begin{align*}
		\|{C}{D}- {C}^{'}{D}^{'}\|_F&=\|\frac{M^{k(i_t)}N_{k(i_t)}}{{c}_kp_{k_{i_t}}}-\frac{M^{k(i_{t^{'}})}N_{k(i_{t^{'}})}}{{c}_kp_{k_{i_{t^{'}}}}}\|_F\\
		&\leq\frac{1}{{c}_kp_{k_{i_t}}}\|M^{k(i_t)}N_{k(i_t)}\|_F+\frac{1}{{c}_kp_{k_{i_{t^{'}}}}}\|M^{k(i_{t^{'}})}N_{k(i_{t^{'}})}\|_F\\
		&\leq\frac{2}{{c}_k}\frac{\|M^{k(r)}N_{k(r)}\|_F}{p_{k_{r}}}
		\leq\frac{2}{\beta {c}}(\frac{\theta_2}{\theta_1})^\frac{1}{2}\sum_{k = 1}^{K}\sum_{i=1}^{n_k}\|M^{k(i)}\|_2\|N_{k(i)}\|_2\\
		&\leq\frac{2}{\beta {c}}(\frac{\theta_2}{\theta_1})^\frac{1}{2}\|M\|_F\|N\|_F,
	\end{align*}
	where $\frac{\|M^{k(r)N_{k(r)}}\|_F}{p_{k_{r}}}=\mathop{\max }\limits_{i_t=1,\cdots,n_k }\frac{\|M^{k_{(i_t)}}\|_2\|N_{k_{(i_t)}}\|_2}{p_{k_{i_t}}}$.
	Furthermore, since
	\begin{align*}
		\|MN-{C}{D}\|_F&\leq\|MN-{C}^{'}{D}^{'}\|_F+\|{C}{D}- {C}^{'} {D}^{'}\|_F  \\
		&\leq\|MN-{C}^{'} {D}^{'}\|_F+\frac{2}{\beta {c}}(\frac{\theta_2}{\theta_1})^\frac{1}{2}\|M\|_F\|N\|_F\\
	\end{align*}
	and
	\begin{align*}
		\|MN-{C}^{'}{D}^{'}\|_F&\leq\|MN-{C}{D}\|_F+
		\|{C}{D}- {C}^{'}{D}^{'}\|_F\\
		&\leq\|MN-{C}{D}\|_F+\frac{2}{\beta {c}}(\frac{\theta_2}{\theta_1})^\frac{1}{2}\|M\|_F\|N\|_F,
	\end{align*}
	we have $|G(x)-G(x^{'})|\leq\|{C}{D}- {C}^{'}{D}^{'}\|_F$. For convenience,  let $\Delta$ denote $\frac{2}{\beta {c}}(\frac{\theta_2}{\theta_1})^\frac{1}{2}\|M\|_F\|N\|_F$ and $\gamma=\sqrt{2clog(1/{\delta})}\Delta$. Considering Hoeffding-Azuma inequality \cite{Mcdiarmid1989}, the probability inequality 
	\begin{align*}
		\mathrm{Pr}\mathit{[\|MN-{C}{D}\|_F\geq \frac{\varphi}{\sqrt{\beta c} }\|M\|_F\|N\|_F+
			\gamma]}\leq\rm{exp(-\frac{\gamma^2}{2c\Delta^2})}=\delta.
	\end{align*}
	is attained	and the theorem follows.
	
	\begin{remark}
	Let $\theta_2 = \theta_1<1$ and $\beta=1$,  we have $\eta=(\theta_2)^\frac{1}{2}+\sqrt{8log(1/\delta)}$  in Theorem \ref{thm3.3}. It is smaller than the one in \cite[Theorem 1]{Drineas2006}, i.e., $\eta=1+\sqrt{8log(1/\delta)}$.  This is because when computing the upper bound of 	$\sum_{h = 1}^{m}\sum_{f = 1}^{p}\sigma^2$,  we do not throw away the second item $-\sum_{k=1}^{K}\frac{\|M^kN_k\|^2_F}{c_k}$.
	\end{remark}

\end{appendices}

%	\section*{References}
%\section*{References}
	
	\bibliography{MM_cite}
	
\end{document}